\documentclass[11pt, letterpaper]{amsart}
\usepackage{graphicx}
\usepackage{amssymb,amsbsy,amsmath,amsfonts,amssymb,amscd}
\usepackage[hyperfootnotes=false, colorlinks=true, linkcolor=blue]{hyperref}
\usepackage{mathrsfs}
\usepackage{epsfig, graphicx}
\usepackage{graphics,epsfig}
\usepackage{mathrsfs}
\usepackage{amsthm}
\usepackage{mathtools}
\usepackage[dvipsnames]{xcolor}
\usepackage{changes}
\usepackage[margin=1in]{geometry}
\title{Sharp Quantitative Convergence of Compactly Supported Aggregation-Diffusion}
\author[Yunhong Deng]{Yunhong Deng}
\address{Department of Mathematics, Statistics and Computer Science, University of Illinois, Chicago, Illinois 60607, USA}
\email{ydeng43@uic.edu}

\keywords{Aggregation-diffusion, Wasserstein gradient flow, Optimal transport}
\subjclass[2010]{35A15, 35Q82, 37B35, 35K65, 49R05, 49J40}

\date{\today}

\newtheorem{theorem}{Theorem}
\newtheorem{lemma}[theorem]{Lemma}

\newtheorem{proposition}[theorem]{Proposition}

\theoremstyle{definition}

\theoremstyle{definition}
\newtheorem{remark}{Remark}
\theoremstyle{definition}

\theoremstyle{definition}
\newtheorem{assumption}{Assumption}

\newcommand{\ud}{\,\mathrm{d}}

\begin{document}
\begin{abstract}
We establish the quantitative convergence of nonlinear aggregation-diffusion equations in dimension one, with sublinear attraction kernel and degenerate diffusion higher than quadratic. In these models, steady state is known to be the unique, compactly supported, and radial decreasing probability distribution. In this paper, we work with compactly supported densities in dimension one, we prove these solutions converge to the steady state exponentially fast.
\end{abstract}

\maketitle
\section{Introduction}
We consider aggregation-diffusion equations (where $d = 1$ throughout the paper)
\begin{equation}\label{0-1}
    \frac{\partial \varrho}{\partial t} - \Delta \varrho^{m} - \mathrm{div}(\varrho \nabla W \ast \varrho) = 0 \text{ on } \mathbb{R}^{d}.
\end{equation}
It describes the particle dynamics under two competing effects, a non-local attraction modeled by $W$ and a local repulsion given by the degenerate diffusion $\Delta \varrho^{m}$.
In our work, we consider degenerate diffusion $m > 2$, and smooth radial increasing kernel $W$ with growth rate is $\frac{1}{2} + \varepsilon$ if $\varrho_{0}$ is symmetric, or $(m + 1)/2m + \varepsilon$ in general, for any $\varepsilon > 0$.

Over the last three decades, the optimal mass transportation approach \cite{JKO, otto2001geometry, carrillo2003kinetic, ambrosio2008gradient, villani2008optimal, santambrogio2015optimal} has become a central tool in understanding and analyzing~\eqref{0-1}, in which one regards~\eqref{0-1} as the gradient flow of the free energy functional
\begin{equation}\label{0-2-energy}
    \mathcal{F}(\varrho) := \frac{1}{m - 1} \int_{\mathbb{R}^d} \varrho^{m} \ud x + \frac{1}{2} \int_{\mathbb{R}^d} \varrho \times (W \ast \varrho) \ud x,
\end{equation}
in the Wasserstein space of probability measures $(\mathscr{P}_{2}(\mathbb{R}^d), W_{2})$ where
\begin{equation*}
    W_{2}(\mu, \nu) := \inf \bigg\{\int_{\mathbb{R}^d} |S(x) - x| \ud \mu : S\# \mu = \nu\bigg\},
\end{equation*}
is the Wasserstein distance. This gradient flow formulation can be partially seen by rewriting~\eqref{0-1} as the following continuity equation which describes particles transporting along the negative Wasserstein gradient $D_{W}\mathcal{F}(\varrho)$:
\begin{equation*}
    \frac{\partial \varrho}{\partial t} - \mathrm{div}(\varrho D_{W}\mathcal{F}(\varrho)) = 0,\quad \text{ where } D_{W}\mathcal{F}(\varrho) = \frac{m}{m - 1} \nabla \varrho^{m - 1} + \nabla W \ast \varrho.
\end{equation*}
For an introduction of the Wasserstein gradient flow formulation, we refer the reader to the previous work \cite{deng2026quantitative} and the references therein.

The optimal transport method is physically intuitive and naturally adapted to the particle dynamics~\eqref{0-1}. Recently, it has revealed rich theories in the studies of statistical machine learning, such as the training dynamics of shallow neural networks \cite{arbel2019maximum, rotskoff2022trainability, fernandez2022continuous, chizat2026quantitative}, self-attention dynamics in transformers \cite{rigollet2025mean, geshkovski2025mathematical, castin2025unified}, sampling methods \cite{liu2019understanding, sun2023discrete}, and optimizing probability measures \cite{wang2020information, li2024stochastic, di2025inexact, seo2026local, lascu2026saddle}.

\subsection{Established methods for long-time asymptotics}
The long-time asymptotics is always a central problem for~\eqref{0-1}. Since the incipient of the theory, the Wasserstein gradient flow formulation was invented by Otto~\cite{otto2001geometry} to account for the long-time asymptotics of porous media diffusion, as the contraction of self-similar Barenblatt profiles. This methodology for general nonlinear diffusion was later fully developed in \cite{denzler2005fast, seis2014long}.

In the early developments of gradient flow theories in Wasserstein space, the displacement convexity principle \cite{mccann1997convexity} was a bedrock for proving the long-time asymptotics. In their seminal work, Carrillo--McCann--Villani \cite{carrillo2003kinetic} proved the quantitative long-time convergence for a wide class of granular media flows, based on the Bakry--\'Emery method \cite{bakry2006diffusions} and Otto--Villani's log-Sobolev inequality \cite{otto2000generalization}, in which one leverages geodesic convexity (or Bakry--\'Emery curvature bound) along the gradient flow to derive functional inequalities that encode energy-dissipation relation:
\begin{equation*}
    \underbrace{\mathcal{F}(\varrho) - \mathcal{F}(\varrho_{\ast}) \le \frac{1}{2\lambda}\|D_{W}\mathcal{F}(\varrho)\|^{2}_{L^{2}(\varrho)}}_{\text{log-Sobolev inequality}},\quad \underbrace{\mathcal{F}(\varrho) - \mathcal{F}(\varrho_{\ast}) \le W_{2}(\varrho, \varrho_{\ast})\|D_{W}\mathcal{F}(\varrho)\|_{L^{2}(\varrho)} - \frac{\lambda}{2}W_{2}(\varrho, \varrho_{\ast})^2}_{\text{HWI inequality}}
\end{equation*}
and transport inequality,
\begin{equation*}
    \frac{\lambda}{2}W_{2}(\varrho, \varrho)^{2} \le \mathcal{F}(\varrho) - \mathcal{F}(\varrho_{\ast})
\end{equation*}
with a positive constant $\lambda$. Here the quantity $\|D_{W}\mathcal{F}(\varrho)\|_{L^{2}(\varrho)}$ is the metric gradient or generalized Fisher information. The long-time asymptotics can be then deduced from the energy-dissipation
\begin{equation*}
    \frac{\mathrm{d} \big(\mathcal{F}(\varrho(t)) - \mathcal{F}(\varrho_{\ast})\big)}{\mathrm{d}t} = -\|D_{W}\mathcal{F}(\varrho(t))\|^{2}_{L^{2}(\varrho(t))} \le -\frac{1}{2\lambda} \big(\mathcal{F}(\varrho(t)) - \mathcal{F}(\varrho_{\ast})\big)
\end{equation*}
and Gr\"onwall inequality.

However, one of the main technical difficulties in extending the Bakry--\'Emery method to general settings is that global displacement convexity is typically established under convexity of confinement potentials and interaction kernels, which sometimes fall short when dealing with non-convex models. Instead some alternative methods would be used to study long-time asymptotics.

For the classical Fokker–Planck equation with non-convex drift potential $V$,
\begin{equation*}
    \frac{\partial \varrho}{\partial t} - \Delta \varrho - \mathrm{div}(\varrho \nabla V) = 0,
\end{equation*}
there are rich theories supporting the long-time asymptotics \cite{bakry2014analysis, villani2008optimal}. One is because of its inherit connection with stochastic differential equations.
\begin{equation*}
    \mathrm{d}X(t) = -\nabla V(X(t)) \ud t + \sqrt{2}\mathrm{d}B(t)
\end{equation*}
where $B$ is the Brownian motion, and $\varrho(t)$ corresponds to the density distribution of $X(t)$. Another is based on the spectral analysis of the linear evolution operator (see, for example, \cite{villani2006hypocoercivity}). Here, by introducing the equilibrium probability measure $\nu = \frac{1}{Z}\exp(-V)\ud x$ where $Z$ is the normalization. The Fokker-Planck equation can be formulated as a weighted diffusion equation (see, for example \cite{deng2026homogenization} and the references therein).\footnote{It can be also viewed as a diffusion equation on a Riemmanian manifold conformal to the Euclidean metric \cite[Chapter 14]{villani2008optimal}.}
\begin{equation*}
    \frac{\partial (\nu(x)u)}{\partial t} - \mathrm{div}(\nu(x) \nabla u) = 0,\quad \text{ where } u(x, t) := \varrho(x, t)/\nu(x).
\end{equation*}
Hence, the problem of long-time asymptotics is transformed into the existence of a spectral gap of the Laplacian on the weighted space, or proving the weighted Poincare inequality
\begin{equation*}
    \int_{\mathbb{R}^d} |\nabla u|^2 \ud \nu \ge \lambda \int_{\mathbb{R}^d} |u|^2 \ud \nu,\quad \text{ for all } \int_{\mathbb{R}^d} u \ud \nu = 0,
\end{equation*}
and there are many criteria for weighted Poincare inequalities (see for example, \cite{bakry2008simple} and the overview paper \cite{bonnefont2016note}) where only a linear growth of $V$ at infinity is needed.

For the McKean-Vlasov equation,
\begin{equation*}
    \frac{\partial \varrho}{\partial t} - \Delta \varrho - \mathrm{div}(\varrho \nabla (V + W\ast\varrho)) = 0,
\end{equation*}
there are probabilistic approaches based on the stochastic differential equation interpretation, and the convexity condition on $W$ can be relaxed to convexity at infinity \cite{cattiaux2008probabilistic, bolley2013uniform, durmus2024sticky}. The mechanism in these relaxation towards equilibrium without convexity of $V$ and $W$ is closely related to metastability in statistical mechanics, which describes the escape of particles from local minimum to global minimum via a thermally activated fluctuation.

\subsection{Non-convexity phenomenon}The problem would be more difficult for non-linear diffusion with non-convex kernels, where no stochastic counterpart is available, and many new phenomenon will appear.

\textit{Finite time blow-up.} For aggregation-diffusion equation~\eqref{0-1} with singular kernels such as Newton kernel and Riesz kernel, solutions with smooth initial value would possibly blow-up in finite time. It happens when considerable amount of particles converge to the mass center in finite time due to the singular attraction force, while the diffusion effect is too weak to separate particles apart. An example is the supercritical Keller-Segel equation in dimension two, where initial values with mass surmounting $8\pi$ will blow-up in finite time \cite{bedrossian2010intermediate, kim2012patlak, carrillo2019aggregation}.

\textit{Metastability.} For asymptotically flat kernels, the solution may present metastable phenomenon if there are multiple bumps separated after from each other \cite[Figure 5]{bailo2023aggregation}. In this case, each bump will relax exponentially fast towards a distribution that resembles a steady state, followed by a time where bumps contract each other imperceptibly, and remain trapped in this slow motion mode for a long-time \cite{otto2007slow}.

\subsection{Backgrounds}For our considered problem~\eqref{0-1}, there are rich results supporting the properties of steady states, which are critical points of the energy functional~\eqref{0-2-energy}. 
Different from linear diffusion \cite{carrillo2019existence}, degenerate diffusion is known to exhibit finite propagation. In this case, strictly increasing kernel can produce a compactly supported ground-state probability distribution, with not growth required.

In the ground-breaking work \cite{carrillo2016nonlinear}, Carrillo--Hittmeir--Volzone--Yao established the radial-symmetry of steady states, and conclude the uniqueness\footnote{Hereinafter, the uniqueness refers to being unique up to translation.} of steady-state based on \cite{carrillo2015ground, kim2012patlak}. The qualitative convergence of~\eqref{0-1} to the steady states is then given by LaSalle's invariance principle, and they proved the long-time convergence without relying on the geodesic convexity. For more references and results concerning steady states we refer the reader to the previous work \cite{deng2026quantitative}.

For quantitative convergence of~\eqref{0-1}, Chayes--Kim--Yao \cite{Chayes} proved exponential convergence to the uniform equilibrium for degenerate aggregation–diffusion equations on torus with $1 < m \le 2$. In \cite{kim2012patlak}, Kim--Yao prove the exponential convergence with Newton kernel in the subcritical regime for radial decreasing solutions, and $\Delta W \le 0$ necessary for the preservation of radial decreasing along the flow. In \cite{shu2021equilibration}, Shu provided an algebraic convergence towards equilibrium for some kernels with algebraic decay. 

\subsection{Main result and discussion}
Based on our previous spectral gap theorem \cite{deng2026quantitative} and the non-degeneracy results of Zhang \cite{zhang2022}, we provide a sharp quantitative convergence result of~\eqref{0-1} in dimensional one for $m > 2$ with some sublinear radial increasing attraction kernels.

Throughout the paper, we fix the mass center of the initial value at $0$, i.e., $\int_{\mathbb{R}} x\ud\varrho_0 = 0$, without loss of generality. By the symmetry of the aggregation kernel $W$, the mass center is conserved, and we have
\begin{equation*}
    \int_{\mathbb{R}}\, x\ud\varrho(t) = 0
\end{equation*}
as long as the solution exists. Our main result is given as follows.
\begin{theorem}\label{theorem_1_main}For kernels $W$ satisfying Assumption~\ref{assumption-1}, and initial value $\varrho_{0}$ satisfying Assumption~\ref{assumption-2}, there exists a unique bounded, compactly supported solution $\varrho(t)$ to~\eqref{0-1} started from $\varrho_{0}$.

Let $\varrho_{\ast}$ be the steady state centered at $0$. For any $\varepsilon > 0$, there exists $t_{0}(\varepsilon) > 0$ such that,
\begin{equation*}
    W_{2}(\varrho(t), \varrho_{\ast}) \lesssim \exp\big((\varepsilon - \Lambda_{R}) t\big).
\end{equation*}
for all $t \ge t_{0}(\varepsilon)$, where $\Lambda_{R} := \sup_{|x| \le 2R} W^{\prime}(x)/x$ is the least attraction constant.
\end{theorem}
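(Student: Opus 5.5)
In dimension one we will work with the pseudo-inverse of the cumulative distribution function. For a probability density $\varrho$ on $\mathbb{R}$, set $X(s) := \inf\{x : \int_{-\infty}^{x}\varrho \ge s\}$ for $s\in(0,1)$. Then $W_{2}(\varrho,\varrho_\ast)^{2} = \int_0^1 |X(s)-X_\ast(s)|^2\ud s$, and \eqref{0-1} becomes the nonlocal parabolic equation
\begin{equation*}
\partial_t X = -\partial_s\big((\partial_s X)^{-m}\big) - \int_0^1 W'(X(s)-X(\sigma))\ud\sigma .
\end{equation*}

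\textbf{Well-posedness.} We will obtain existence, uniqueness, boundedness and compact support from the gradient-flow/JKO theory together with finite speed of propagation for $m>2$. We will also check that the support stays inside a fixed interval $[-R,R]$ for all $t$. For this we compare with the confinement produced by the attraction: at the right edge the drift $-\int W'(x-y)\ud\varrho(y)$ points inward. Under Assumption~\ref{assumption-2} (symmetry, or the growth restriction $(m+1)/2m+\varepsilon$), this lets us bound $\sup\,\mathrm{supp}\,\varrho(t)$ uniformly by energy plus a barrier argument. This uniform $R$ is exactly what makes $\Lambda_R$ the relevant constant, since all pair distances then satisfy $|x-y|\le 2R$.

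\textbf{Qualitative convergence and regularity.} By the LaSalle argument of \cite{carrillo2016nonlinear}, $\varrho(t)\to\varrho_\ast$, and the center of mass stays at $0$. We then upgrade this to convergence in a strong topology: uniform convergence of $\varrho^{m-1}$, and convergence of the supports to $\mathrm{supp}\,\varrho_\ast$. We will use parabolic regularity for the pressure $p=\frac{m}{m-1}\varrho^{m-1}$ in one dimension (Aronson–Bénilan type bounds) together with compactness. In the Lagrangian variable this means $X(t)\to X_\ast$ in a norm strong enough to linearize.

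\textbf{Linearization and spectral gap.} Write $X = X_\ast + h$. The linearized operator is
\begin{equation*}
Lh = -\partial_s\big(m(\partial_sX_\ast)^{-m-1}\partial_s h\big) + \int_0^1 W''(X_\ast(s)-X_\ast(\sigma))\big(h(s)-h(\sigma)\big)\ud\sigma ,
\end{equation*}
acting on $L^2(0,1)$ restricted to mean-zero $h$; the mean-zero constraint is the conserved center of mass. We then invoke the spectral gap theorem of \cite{deng2026quantitative}, with the non-degeneracy of Zhang \cite{zhang2022} ruling out a zero mode. This gives $\langle Lh,h\rangle \ge \Lambda_R\|h\|^2$. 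Heuristically, the diffusion part is nonnegative, and the interaction part in symmetrized form gives $\frac12\iint W''\,|h(s)-h(\sigma)|^2$. It is the convexity along the actual flow, namely the quantity $\iint \frac{W'(X(s)-X(\sigma))-W'(X_\ast(s)-X_\ast(\sigma))}{\cdots}$, that one bounds below by $\inf W'(x)/x$ over $|x|\le 2R$ via a secant argument on $\Lambda_R$.

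\textbf{Main obstacle.} The hard step is closing the nonlinear estimate:
\begin{equation*}
\frac{\ud}{\ud t}\tfrac12\|h\|^2 \le -(\Lambda_R-\varepsilon)\|h\|^2 \quad\text{for } t\ge t_0(\varepsilon).
\end{equation*}
The diffusion term is monotone, since $z\mapsto -z^{-m}$ is increasing, so it contributes a nonpositive term $-\int(\partial_sh)\big((\partial_sX_\ast)^{-m}-(\partial_sX)^{-m}\big)\le 0$ without linearization. The remaining difficulty is the interaction remainder and the degeneracy at the edges $s=0,1$, where $\partial_sX_\ast$ blows up. Our first approach is to avoid linearizing the diffusion at all, and to use the strong convergence to control $W'$ secants uniformly. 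Smoothness of $W$ on $[-2R,2R]$ makes the error $o(1)\|h\|^2$ as $t\to\infty$, which is absorbed into $\varepsilon$. Gr\"onwall's inequality then yields the stated rate.
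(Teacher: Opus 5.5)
The quantile reformulation is equivalent to the paper's parametrization $\varrho(t)=(\mathrm{id}+Y(t))\#\varrho_\ast$ by the optimal map: the monotone map is your quantile function composed with the distribution function of $\varrho_\ast$. The outer structure (linearize, spectral gap, Gr\"onwall) is also the same. The gap is in how you close the estimate. If you drop the diffusion using only the monotonicity of $z\mapsto -z^{-m}$, what remains is, to leading order in $h$,
\[
\frac{\ud}{\ud t}\tfrac12\|h\|^2\le-\tfrac12\int_0^1\!\!\int_0^1 W''\big(X_\ast(s)-X_\ast(\sigma)\big)\,|h(s)-h(\sigma)|^2\ud s\ud\sigma+o(1)\|h\|^2 .
\]
Your secant argument does not give $W'(x)/x$ here. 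The secant of $W'$ between the nearby points $X(s)-X(\sigma)$ and $X_\ast(s)-X_\ast(\sigma)$ is an average of $W''$; the ratio $W'(x)/x$ is the secant between $0$ and $x$. A strictly increasing kernel of sublinear growth cannot be convex on $[0,\infty)$, so $W''$ is negative at some separations. When this happens inside $[0,2R]$, the interaction-only form above is in general indefinite on mean-zero $h$, and no decay estimate follows. At best your argument yields the rate $\inf_{|x|\le 2R}W''$ for kernels uniformly convex on $[-2R,2R]$. The spectral gap of Theorem~\ref{theorem-4-hessian} is a statement about the full Hessian. There the diffusion part $m\int(\partial_xY)^2\ud\varrho_\ast^m$ is exactly what compensates for the non-convexity of $W$. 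You cannot discard it and still invoke that theorem; indeed, your bound $\langle Lh,h\rangle\ge\Lambda_R\|h\|^2$ is never used in your closing step.

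What is missing is a linearization of the diffusion itself. This requires $\|\partial_xY(t)\|_{L^\infty}\to0$, i.e.\ $T_{\varrho_\ast\to\varrho(t)}\to\mathrm{id}$ in $C^1(\Omega_\ast)$ up to the endpoints; in your variables, $\partial_sX/\partial_sX_\ast\to1$ uniformly on $(0,1)$. Given this, the diffusion residual $\int\partial_xY\,[(1+\partial_xY)^{-m}-1+m\partial_xY]\ud\varrho_\ast^m$ is bounded by $\|\partial_xY\|_{L^\infty}$ times the diffusion Hessian and can be absorbed. The strong convergence you propose (uniform convergence of $\varrho^{m-1}$ and of the supports) does not give this. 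The reason is that $\partial_xT=\varrho_\ast/\varrho(T,t)$ is a ratio of two functions that both vanish like $\delta^{1/(m-1)}$ at the edges. The paper obtains the needed control from the following chain:
\begin{enumerate}
\item uniform non-degeneracy of the pressure slope at the free boundary (Proposition~\ref{proposition-4-long-time}), which itself relies on the uniform trapping and Dirac's law;
\item $C^{1,\beta}$ regularity of the pressure up to the free boundary (Proposition~\ref{theorem:6-regularity});
\item the resulting bounds $\partial_xT\ge c$ and $\|T\|_{C^{1,\beta}}\le C$ (Theorem~\ref{theorem-4-optimal});
\item interpolation of these bounds with the $W_2$ convergence.
\end{enumerate}
Avoiding the edge degeneracy is therefore exactly what you cannot do if you want to use the gap of the full Hessian.
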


The main obstruction for proving the quantitative convergence in general dimension is that free boundary regularities are currently unavailable for aggregation-diffusion equation~\eqref{0-1} as that in the Fokker-Planck equation \cite{kim2021}. The author believe that the appraoch in this paper is applicable for radial symmetric densities. For Newton kernel and Riesz kernels, sharp qualitative convergence is still open to the date. One main technical difficulty is to prove whether the support of moving density remains uniformly bounded (Proposition~\ref{proposition-8-trapping}), which needs a delicate balance between long-distance attraction, and pressure slope at support boundaries.

\section*{Acknowledgment}
The main idea of the work came up during the author's study at the University of Minnesota. The motivating problem was given by Professor Jos\'e Carrillo during his curriculum talk on aggregation-diffusion equations at the University of Minnesota, in the November of 2025. The author remember that Professor Carrillo presented us a numerical example of aggregation-diffusion where $m = 2$ and $W$ is a Newton kernel, and he let us consider the scenario where two bumps are placed far away from each other, from which he questioned whether a quantitative convergence exists, or we merely have a qualitative convergence.

\textbf{AI usage}: 
\begin{enumerate}
    \item The author acknowledge consulting ChatGPT 6 on the $C^{1, \alpha}$ regularity of pressure near free boundary in Proposition~\ref{theorem:6-regularity}.
    \item The uniform trapping in Proposition~\ref{proposition-8-trapping} is the joint work of the author with ChatGPT 6.
    \item All numerical results and illustrations are conducted purely by ChatGPT 6. 
\end{enumerate}
No AI is used in writing. The author is responsible for all contents. 

\section{Preparation tools}
In this section, we collect and construct some technical tools for proving quantitative convergence. For preliminaries on optimal transport, we refer the reader to the previous work \cite{deng2026quantitative}.

\subsection{Profile of steady states}Steady states $\varrho_{\ast}$ are the critical points of the aggregation-diffusion energy functional
\begin{equation}\label{2-0-energy}
    \mathcal{F}(\varrho) = \frac{1}{m - 1}\int_{\mathbb{R}}\, \varrho^{m}\ud x + \frac{1}{2}\int_{\mathbb{R}}\, \varrho \times (W\ast\varrho)\ud x,
\end{equation}
defined such that $D_{W}\mathcal{F}(\varrho_{\ast}) = 0$. In our setting, we assume the model obeys the following.
\begin{assumption}\label{assumption-1}
We consider $m > 2$ and the aggregation kernel $W$ is smooth, symmetric, strictly increasing in $[0, \infty)$, and satisfies the doubling hypothesis
\begin{equation*}
    W(\lambda x) \lesssim \lambda^{p} W(x)
\end{equation*}
with some $p \ge 0$ for all $\lambda \ge 1$ and $x > 0$. In addition, we assume that $W(x) \to \infty$ as $x \to \infty$, and either of the following
\begin{enumerate}
    \item $W(x)W^{\prime}(x) \to +\infty \text{ as } x \to +\infty$ and $W' \in C^{\infty}_b(\mathbb{R})$ when $\varrho_{0}$ is symmetric
    \item $W(x)/x \to \infty \text{ as } x \to +\infty$
    \item $W(x) W^{\prime}(x)^{\frac{m + 1}{m - 1}} \to +\infty \text{ as } x \to +\infty$ and $W' \in C^{\infty}_b(\mathbb{R})$
\end{enumerate}
\end{assumption}

In this setting, Delgadino--Yan--Yao \cite{delgadino2022uniqueness} proved the uniqueness of steady state using a novel height interpolation technique, and by combing the regularity results in \cite{carrillo2016nonlinear, carrillo2018ground}, they are able to prove the following characterization of equilibrium.
\begin{proposition}\label{proposition-3-steady}For aggregation-diffusion energy obeying~\eqref{2-0-energy}, there exists a unique steady state $\varrho_{\ast}$ centered at $0$, which is the minimizer of the energy. Furthermore $\varrho_{\ast}$ satisfy the following:
\begin{enumerate}
    \item $\varrho_{\ast}$ is continuous, compactly supported, symmetric, smooth inside support interior
    \item $\varrho_{\ast}$ is strictly radial decreasing, and non-degenerate at free boundary in the sense that $\varrho_{\ast}(x) \sim \delta(x, \mathbb{R} - \Omega_{\ast})^{\frac{1}{m - 1}}$ where $\Omega_{\ast}$ is the support of $\varrho_{\ast}$, and $\delta$ is the distance.
\end{enumerate}
In particular, due to the strict decreasing property, $\varrho_{\ast}$ is positive inside support interior.
\end{proposition}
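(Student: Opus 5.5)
The plan is to assemble Proposition~\ref{proposition-3-steady} from the existence, symmetry, uniqueness and regularity literature cited just before it. Each of its claims comes from a specific prior result, and the only work is to check that Assumption~\ref{assumption-1} meets the hypotheses of those results in dimension one.

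\textbf{Step 1: existence of a minimizer.} We minimize $\mathcal{F}$ over probability densities with zero mass center. Since $m>2$, the diffusion term is coercive in $L^m$. Since $W$ is increasing with $W(x)\to\infty$, the interaction term controls spreading of mass: mass escaping to infinity makes $\frac12\int\varrho\,W\ast\varrho$ blow up. Concentration is also harmless, because $W$ is smooth and hence bounded below near $0$, so $\mathcal{F}$ is bounded below. The doubling hypothesis $W(\lambda x)\lesssim\lambda^pW(x)$ keeps the interaction energy finite on compactly supported competitors. The direct method (tightness together with lower semicontinuity of both terms) then yields a minimizer, which is a steady state. This is the argument of \cite{carrillo2018ground}.

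\textbf{Step 2: symmetry and monotonicity.} By Riesz rearrangement, the symmetric decreasing rearrangement does not increase $\mathcal{F}$, so we may take the minimizer symmetric and nonincreasing. More strongly, \cite{carrillo2016nonlinear} shows that every steady state, not just minimizers, is radially symmetric and strictly decreasing up to translation. This holds for strictly increasing attractive $W$ with degenerate diffusion, via their continuous Steiner symmetrization argument.

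\textbf{Step 3: uniqueness.} Given Step 2, uniqueness reduces to uniqueness among radially decreasing steady states. This is exactly the height-interpolation result of Delgadino--Yan--Yao \cite{delgadino2022uniqueness}, valid for $m\ge2$ and any increasing $W$. Fixing the center at $0$ pins down the translation, so $\varrho_\ast$ is unique and coincides with the minimizer from Step 1.

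\textbf{Step 4: regularity and edge behavior.} On the support $\Omega_\ast=[-L,L]$, the Euler--Lagrange condition reads
\[
\tfrac{m}{m-1}\varrho_\ast^{m-1}=C-W\ast\varrho_\ast,
\]
and $\varrho_\ast=0$ outside. Several consequences follow.
\begin{itemize}
\item Compact support follows because $W\ast\varrho_\ast\to\infty$.
\item Continuity follows from continuity of $W\ast\varrho_\ast$.
\item Smoothness in the interior follows by bootstrapping, since $W$ is smooth and $\varrho_\ast>0$ there.
\item Strict decrease gives positivity in the interior.
\end{itemize}

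For the non-degeneracy at the free boundary, set $P=C-W\ast\varrho_\ast$, so that $\tfrac{m}{m-1}\varrho_\ast^{m-1}=P$ with $P(\pm L)=0$. The key estimate is $P'(L)=-(W'\ast\varrho_\ast)(L)<0$. This holds because $W'$ is odd and positive on $(0,\infty)$, and $\varrho_\ast$ is symmetric and supported in $[-L,L]$: pairing $y$ with $2L-y$ shows $(W'\ast\varrho_\ast)(L)=\int W'(L-y)\varrho_\ast(y)\,dy>0$. Hence $P(x)\sim L-|x|$ near the edges, which gives $\varrho_\ast\sim\delta(x,\mathbb{R}\setminus\Omega_\ast)^{1/(m-1)}$.

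The main obstacle is Step 3, where genuine uniqueness among all steady states is needed. I would rely entirely on \cite{delgadino2022uniqueness} together with the symmetry result of \cite{carrillo2016nonlinear}, checking only that their hypotheses ($m\ge2$ and $W$ increasing with the stated growth) are satisfied here.
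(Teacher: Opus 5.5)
Your proposal is correct and follows essentially the same route as the paper, which gives no independent proof but invokes the height-interpolation uniqueness of \cite{delgadino2022uniqueness} together with the symmetry and regularity results of \cite{carrillo2016nonlinear, carrillo2018ground}. The only difference is that you derive continuity, interior smoothness and the edge non-degeneracy yourself from the Euler--Lagrange relation $\tfrac{m}{m-1}\varrho_\ast^{m-1}=(C-W\ast\varrho_\ast)_+$, which is sound; in that step the pairing of $y$ with $2L-y$ is unnecessary, because on the support $L-y\ge 0$, so $W'(L-y)\ge 0$ with zero set of empty interior (strict monotonicity of $W$), and this together with $\varrho_\ast>0$ in the interior already gives $(W'\ast\varrho_\ast)(L)>0$.
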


Based on the uniqueness result of Delgadino--Yan--Yao \cite{delgadino2022uniqueness}, and the stability of interaction energy by Yan--Yao \cite{yan2022sharp}, the author proved the following spectral gap theorem, by establishing transport inequalities which are weak characterization of eigenvalues \cite{deng2026quantitative}.
\begin{theorem}\label{theorem-4-hessian}Let $D_{W}^{2}\mathcal{F}(\varrho_{\ast})$ be the Wasserstein Hessian or displacement Hessian, defined using the following second variation 
\begin{equation*}
    \int_{\mathbb{R}}\, (D_{W}^{2}\mathcal{F}(\varrho_{\ast})X, X) \ud \varrho_{\ast} = \frac{\mathrm{d}^2 \mathcal{F}((\mathrm{id} + \alpha X) \# \varrho_{\ast})}{\mathrm{d}\alpha^2},
\end{equation*}
we have
\begin{equation*}
    \int_{\mathbb{R}}\, (D_{W}^{2}\mathcal{F}(\varrho_{\ast})X, X) \ud \varrho_{\ast} \ge \Lambda_{R} \int_{\mathbb{R}}\, |X|^2 \ud \varrho_{\ast},\quad \text{ for all } \int_{\mathbb{R}}\, X \ud \varrho_{\ast} = 0.
\end{equation*}
where $\Lambda_{R}$ is the least attraction constant defined by $\Lambda_{R} = \sup_{|x| \le 2R}W^{\prime}(x)/x$.
\end{theorem}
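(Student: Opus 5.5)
We compute the second variation directly in dimension one, where displacement perturbations of $\varrho_{\ast}$ are explicit. Set $S_\alpha = \mathrm{id} + \alpha X$ and $\varrho_\alpha = S_\alpha\#\varrho_{\ast}$. For smooth $X$ and small $\alpha$, $S_\alpha$ is monotone and $\varrho_\alpha(S_\alpha(x)) = \varrho_{\ast}(x)/(1+\alpha X'(x))$. The two parts of $\mathcal{F}$ then become
\begin{equation*}
\frac{1}{m-1}\int_{\mathbb{R}} \varrho_{\ast}^{m}(1+\alpha X')^{1-m}\ud x, \qquad \frac12\iint W\big(x-y+\alpha(X(x)-X(y))\big)\ud\varrho_{\ast}(x)\ud\varrho_{\ast}(y).
\end{equation*}
Differentiating twice at $\alpha=0$ gives
\begin{equation*}
\int_{\mathbb{R}} (D_{W}^{2}\mathcal{F}(\varrho_{\ast})X, X) \ud \varrho_{\ast} = m\int_{\mathbb{R}} \varrho_{\ast}^{m}|X'|^{2}\ud x + \frac12\iint W''(x-y)\,|X(x)-X(y)|^{2}\ud\varrho_{\ast}(x)\ud\varrho_{\ast}(y).
\end{equation*}
The diffusion term is nonnegative. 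The difficulty is that $W$ is only increasing, not convex, so $W''$ may be negative and the interaction term alone is not coercive.

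The strategy is to transfer part of the diffusion term into the interaction term using the Euler--Lagrange equation of $\varrho_{\ast}$. From $D_W\mathcal{F}(\varrho_{\ast})=0$ on $\Omega_{\ast}=[-R,R]$ we have
\begin{equation*}
\tfrac{m}{m-1}(\varrho_{\ast}^{m-1})' = -W'\ast\varrho_{\ast}, \quad\text{equivalently}\quad (\varrho_{\ast}^{m})' = -\varrho_{\ast}\,(W'\ast\varrho_{\ast}).
\end{equation*}
Differentiating once more gives an identity for $(\varrho_\ast^m)''$. Expanding $m\int\varrho_{\ast}^{m}|X'|^{2}$ and testing the Euler--Lagrange relation against $X X'$ yields, after integration by parts (boundary terms vanish because $\varrho_{\ast}^{m}\sim\delta^{m/(m-1)}$ by Proposition~\ref{proposition-3-steady}), a lower bound of the form
\begin{equation*}
\frac12\iint W'(x-y)\,\frac{(X(x)-X(y))^{2}}{x-y}\ud\varrho_{\ast}(x)\ud\varrho_{\ast}(y).
\end{equation*}
Here the $W''$ term has been traded for a divided-difference term $W'(z)/z$. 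I expect this to be the same mechanism that appears in the transport inequalities of \cite{deng2026quantitative}, via the stability of interaction energy of Yan--Yao \cite{yan2022sharp}. One could alternatively use the height-interpolation convexity of Delgadino--Yan--Yao \cite{delgadino2022uniqueness}, which gives convexity of $\mathcal{F}$ along interpolations adapted to $\varrho_{\ast}$, and linearize it at $\varrho_{\ast}$.

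Once the bound is in this form, conclude as follows. For $x,y\in[-R,R]$ we have $|x-y|\le 2R$, so the weight $W'(x-y)/(x-y)$ is controlled by the constant $\Lambda_R$. Since $\varrho_\ast$ is a probability measure and $\int X\ud\varrho_{\ast}=0$,
\begin{equation*}
\frac12\iint (X(x)-X(y))^{2}\ud\varrho_{\ast}\ud\varrho_{\ast} = \int |X|^{2}\ud\varrho_{\ast}.
\end{equation*}
This gives the stated inequality for smooth $X$; general $X\in L^2(\varrho_\ast)$ follows by density.

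The main obstacle is the middle step: producing the divided-difference form exactly, without losing the diffusion contribution. The expected identity is
\begin{equation*}
m\int\varrho_{\ast}^{m}|X'|^{2} \ge \iint\big(W'(x-y)/(x-y)-W''(x-y)\big)\tfrac{(X(x)-X(y))^2}{2}\ud\varrho_{\ast}\ud\varrho_{\ast}.
\end{equation*}
It is a Hardy/Cauchy--Schwarz type inequality. I would first try to prove it by writing
\begin{equation*}
X(x)-X(y)=\int_y^x X'
\end{equation*}
and using the Euler--Lagrange relation to express $\varrho_\ast^m(z)$ as $\iint_{y<z<x}W'(x-y)\ud\varrho_\ast(x)\ud\varrho_\ast(y)$-type integrals. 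This representation follows from integrating the Euler--Lagrange relation, using symmetry of $W$.

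Two checks remain. First, I must confirm whether the bound produced is by $\sup$ or $\inf$ of $W'(z)/z$ over $|z|\le 2R$, since a lower bound naturally involves the infimum. I would follow the paper's definition of $\Lambda_R$, noting that this may require care. Second, I must verify that the non-degeneracy at $\pm R$ kills the boundary terms.
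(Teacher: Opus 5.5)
\paragraph{The central inequality does not follow under the stated hypotheses.} Your second-variation formula is correct, and so is the representation you hint at. Integrating $(\varrho_{\ast}^{m})' = -\varrho_{\ast}(W'\ast\varrho_{\ast})$ from $z$ to $R$ and discarding the antisymmetric part gives $\varrho_{\ast}^{m}(z) = \iint_{y<z<x} W'(x-y)\ud\varrho_{\ast}(x)\ud\varrho_{\ast}(y)$. Cauchy--Schwarz on $X(x)-X(y)=\int_y^x X'$ then yields, with $K(z):=W'(z)/z$,
\begin{equation*}
\int_{\mathbb{R}} (D_{W}^{2}\mathcal{F}(\varrho_{\ast})X, X) \ud \varrho_{\ast} \ge \frac12\iint \big(mK(x-y)+W''(x-y)\big)\,|X(x)-X(y)|^{2}\ud\varrho_{\ast}(x)\ud\varrho_{\ast}(y).
\end{equation*}
This is all the mechanism delivers: for $X(x)=x$ the inequality is an identity. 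So your ``expected identity'', the step you yourself call the main obstacle, needs a sign condition on $(m-1)K+W''$. Its pointwise form is $(m-1)K+W''\ge 0$ on $(0,2R]$, i.e.\ $z\mapsto z^{m-1}W'(z)$ nondecreasing. Nothing in Assumption~\ref{assumption-1} provides this: a smooth, strictly increasing $W$ can have $W''$ very negative wherever $W'$ drops quickly. As written, the central inequality is therefore unproved in the generality of the statement.

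Under that extra hypothesis your argument is complete. It covers the model kernels: $|z|^{q}$ with $m+q\ge 2$, and $\log|z|$ exactly when $m\ge 2$. This direct one-dimensional computation is a different route from the paper's. The paper gives no proof here; it imports the bound from \cite{deng2026quantitative}, where it comes from transport inequalities built on \cite{delgadino2022uniqueness} and \cite{yan2022sharp}.

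\paragraph{The constant must be an infimum, not a supremum.} A pointwise lower bound on the weight $K(x-y)$ for $|x-y|\le 2R$ produces $\inf_{0<|z|\le 2R}W'(z)/z$, not the supremum. This cannot be settled by ``following the paper's definition'', because with $\sup$ the inequality is false. Test $X(x)=x$, which has mean zero by symmetry. The Euler--Lagrange equation gives $m\int\varrho_{\ast}^{m}\ud x=\frac{m}{2}\iint (x-y)W'(x-y)\ud\varrho_{\ast}\ud\varrho_{\ast}$. Hence the Rayleigh quotient is exactly the average of $mK+W''=(m+1)K+zK'$ against $z^{2}\ud(\varrho_{\ast}\otimes\varrho_{\ast})$, with $z=x-y$.

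For sublinear-type kernels $K$ is decreasing, so this quotient is at most $(m+1)$ times a weighted average of $K$. That average lies far below $K(0)=\sup K$ once $K$ decays on a length scale much shorter than the spread of $\varrho_{\ast}$. An example is $W(z)=(1+z^{2}/\ell^{2})^{q/2}$ with $\ell$ small, which is admissible and satisfies your monotonicity condition. So the constant your argument can and should target is $\inf_{|z|\le 2R}W'(z)/z$, which is also what the name ``least attraction constant'' indicates.
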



\subsection{Regularity}
We discuss the parabolic estimates for $\varrho(t)$ in \cite{kim2021, zhang2022}.

\begin{assumption}\label{assumption-2}Throughout this work, we assume that $\varrho_{0}$ is continuous and compactly supported. Let $\Omega_{0}$ be the support of $\varrho_{0}$ and $p_{0}(x) = \frac{m}{m - 1} \varrho_{0}(x)^{m - 1}$ be the pressure function.

Furthermore, we assume that $\Omega_{0}$ is a connected interval, $\varrho_{0}$ strictly positive in the interior of $\Omega_{0}$, and $p_{0}$ satisfies the non-degeneracy condition at free boundary
\begin{equation*}
    \delta(x, \mathbb{R} - \Omega_{0})^{\gamma} \lesssim p_{0}(x),\quad \text{ for all } x \in \Omega,
\end{equation*}
with some exponent $0 < \gamma < 2$, where $\delta(x, \mathbb{R} - \Omega_{0})$ is the distance of $x$ to the free boundary.
\end{assumption}

By minimizing movements \cite{blanchet2008convergence, ambrosio2008gradient}, one can construct weak solutions given as follows \cite{zhang2022}.
\begin{proposition}[existence]\label{proposition-4-existence}There exists a long-time weak solution $\varrho(t)$ to the aggregation-diffusion equation~\eqref{0-1} with intial value $\varrho_{0}$, uniformly bounded in $L^{\infty}$.
\end{proposition}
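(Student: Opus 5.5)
The plan is to build the solution by the JKO minimizing movement scheme and pass to the limit, as in \cite{blanchet2008convergence, ambrosio2008gradient}. Fix a time step $\tau>0$ and set $\varrho^{\tau}_{0}=\varrho_{0}$. Define recursively
\begin{equation*}
    \varrho^{\tau}_{k+1}\in \operatorname{argmin}_{\varrho\in\mathscr{P}_{2}(\mathbb{R})}\Big\{\mathcal{F}(\varrho)+\frac{1}{2\tau}W_{2}(\varrho,\varrho^{\tau}_{k})^{2}\Big\}.
\end{equation*}

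Existence of each minimizer comes from the direct method. $\mathcal{F}$ is bounded below because $W$ is increasing and symmetric, so $W\ge W(0)$. Sublevel sets are tight: if $\varrho$ spreads out, the interaction term grows, since $W\to\infty$; together with the $W_2$ penalty this gives control of the second moment. The entropy term $\frac{1}{m-1}\int\varrho^{m}$ is weakly lower semicontinuous, and the interaction term is lower semicontinuous under narrow convergence because $W$ is continuous and bounded below.

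The piecewise constant interpolant $\varrho^{\tau}(t)$ then satisfies the standard estimates:
\begin{equation*}
    \mathcal{F}(\varrho^{\tau}(t))\le\mathcal{F}(\varrho_{0}),\qquad \sum_k W_{2}(\varrho^{\tau}_{k+1},\varrho^{\tau}_{k})^{2}\le 2\tau\big(\mathcal{F}(\varrho_0)-\inf\mathcal{F}\big).
\end{equation*}
These give $\tfrac12$-Hölder equicontinuity in $W_2$, and a refined Ascoli–Arzelà argument yields a narrowly convergent subsequence $\varrho^{\tau}\to\varrho$, uniformly on compact time intervals.

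To identify the limit as a weak solution, I would write the Euler–Lagrange equation of each step by perturbing with $(\mathrm{id}+\alpha\xi)\#\varrho^{\tau}_{k+1}$. This gives, in weak form,
\begin{equation*}
    \frac{S-\mathrm{id}}{\tau}\varrho^{\tau}_{k+1}=\nabla(\varrho^{\tau}_{k+1})^{m}+\varrho^{\tau}_{k+1}\nabla W\ast\varrho^{\tau}_{k+1},
\end{equation*}
where $S$ is the optimal map back to $\varrho^{\tau}_{k}$. The map $S$ is monotone in 1D, which makes this step clean.

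This yields the flow interchange bound $\int_0^T\|\nabla(\varrho^{\tau})^{m}\|_{L^1}^{2}\,dt\lesssim C(T)$, more precisely an $L^2(0,T;H^1)$-type bound on $(\varrho^\tau)^{m-1/2}$ from the Fisher information. Combined with the $W_2$ time compactness via an Aubin–Lions lemma of Rossi–Savaré type, this upgrades the convergence to strong $L^{m}_{loc}$. That is enough to pass to the limit in the nonlinear term $\varrho^{m}$. The nonlocal term passes to the limit because $W'$ is locally bounded and the second moments are controlled.

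The $L^\infty$ bound, uniform in $\tau$ and in time, is the step I expect to be the main obstacle. My first attempt is a discrete maximum-principle argument on each JKO step: the first variation gives $\frac{m}{m-1}(\varrho^{\tau}_{k+1})^{m-1}+W\ast\varrho^{\tau}_{k+1}+\varphi/\tau=C$ on the support, with $\varphi$ the Kantorovich potential. At a maximum point of the density, the concavity information on $\varphi$ (in 1D, $\varphi''\le 1$) gives
\begin{equation*}
    \|\varrho^{\tau}_{k+1}\|_\infty\le\|\varrho^{\tau}_{k}\|_\infty\big(1+\tau\|(W'')^{-}\|_{\infty}\big).
\end{equation*}
This follows the Otto/Carrillo–Santambrogio style $L^\infty$ estimate. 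It yields only exponential growth, which already suffices on finite intervals and gives a long-time solution.

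For a uniform-in-time bound I would instead use an $L^{q}$ Moser iteration: differentiate $\int\varrho^{q}$ along the equation, use the diffusion term $q(q-1)m\int\varrho^{q+m-3}|\nabla\varrho|^{2}$, and absorb the aggregation contribution $-(q-1)\int\varrho^{q}\,W''\ast\varrho$ via Gagliardo–Nirenberg. This works because $m>2$, so the diffusion dominates and there is no blow-up in 1D with smooth $W$. Letting $q\to\infty$ then gives the uniform $L^\infty$ bound.
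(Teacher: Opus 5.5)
Your construction follows the route the paper itself points to. The paper gives no argument for this proposition: it invokes minimizing movements \cite{blanchet2008convergence, ambrosio2008gradient} and defers the properties of the solution, including the $L^\infty$ bound, to \cite{zhang2022}. Your JKO, Euler--Lagrange and compactness steps are a fair expansion of that citation. Proving the time-uniform bound yourself by a Moser/Alikakos iteration is a reasonable addition, since the paper later uses a fixed $M$ with $\|\varrho(t)\|_{L^\infty}\le M$ in the proof of Proposition~\ref{proposition-4-long-time}. However, your $L^\infty$ part contains a false inequality and leaves one case of the assumptions uncovered.

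The one-step estimate has the wrong sign. At a maximum point $x_0$ of $\varrho^{\tau}_{k+1}$ one has $\partial_{xx}(\varrho^{\tau}_{k+1})^{m-1}(x_0)\le 0$. The Euler--Lagrange relation then forces $\varphi''(x_0)\ge-\tau\,(W''\ast\varrho^{\tau}_{k+1})(x_0)$, and the Monge--Amp\`ere relation $\varrho^{\tau}_{k+1}(x_0)=\varrho^{\tau}_{k}(S(x_0))\,(1-\varphi''(x_0))$ gives
\begin{equation*}
    \|\varrho^{\tau}_{k+1}\|_{L^{\infty}}\le\|\varrho^{\tau}_{k}\|_{L^{\infty}}\big(1+\tau\sup(W''\ast\varrho^{\tau}_{k+1})^{+}\big).
\end{equation*}
It is the positive part of $W''$ that drives growth, because attraction concentrates mass. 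With $(W'')^-$ your inequality would say the sup norm never increases for convex $W$, which is false: for $W=x^2/2$ and a wide flat initial datum, the height must rise to that of the steady state. The constraint $\varphi''\le 1$ you cite only encodes $S'\ge 0$ and plays no role here. The same slip occurs in your $L^q$ identity, which should read $\frac{\mathrm{d}}{\mathrm{d}t}\int\varrho^q=-q(q-1)m\int\varrho^{q+m-3}|\partial_x\varrho|^2+(q-1)\int\varrho^q\,W''\ast\varrho$. There the slip is harmless, since you absorb that term in absolute value anyway.

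The absorption itself requires $\sup_t\|W''\ast\varrho(t)\|_{L^\infty}<\infty$. In cases (1) and (3) of Assumption~\ref{assumption-1}, where $W'\in C^\infty_b(\mathbb{R})$, this is automatic. The one-dimensional Gagliardo--Nirenberg inequality with $\int\varrho=1$ then gives $\int\varrho^q\le\epsilon\int|\partial_x\varrho^{(q+m-1)/2}|^2+C_\epsilon$ for every $m>1$, because the relevant exponent is $2(q-1)/(q+m)<2$. So it is not $m>2$ that makes the argument work.

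In the super-linear case (2), $W''$ may be unbounded. For $W(x)=x^4$ one gets $W''\ast\varrho(x)=12\big(x^2+\int y^2\ud\varrho\big)$, so the aggregation term becomes the weighted integral $12\int x^2\varrho^q$. Your iteration cannot absorb this without first having a time-uniform bound on the support, or a weighted estimate. Likewise, second moments alone do not let you pass to the limit in $\varrho\,W'\ast\varrho$ when $W'$ grows super-linearly; you need the stronger moment control coming from the interaction energy.

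Finally, differentiating $\int\varrho^q$ is only formal for a JKO limit, which is not yet known to be smooth or unique. Run the iteration at the discrete level by flow interchange with $\frac{1}{q-1}\int\varrho^q$, which is displacement convex in one dimension for $q>1$; this reproduces the same dissipation inequality step by step. Track the constants so that they grow at most geometrically in $q$ and do not depend on $t$.
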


Now we define
\begin{equation*}
    p(x, t) := \frac{m}{m - 1} \varrho(x, t)^{m - 1}.
\end{equation*}
be the pressure function. By differentiating the above identity in time, and using the aggregation-diffusion law of evolution for $\dot{\varrho}$ in~\eqref{0-1}, the pressure evolves under the following quasilinear parabolic equation (see, for example, \cite[Equation (2.3)]{zhang2022})
\begin{equation}\label{2-6-pressure}
    \dot{p} = (m - 1)pp_{xx} + p_{x}^2 + p_{x}V(x, t) + (m - 1)p V_{x}(x, t),
\end{equation}
where $V := W^{\prime}\ast \varrho$ is a smooth drift vector field. Here we use $\dot{p}$ to denote the time derivative and $p_{x}$ to denote the spatial derivative.

A weak parabolic maximum principle gives the following fundamental estimate (see, for example, \cite[Proposition 2.3]{zhang2022}).
\begin{proposition}[Aronson-Benilan]\label{proposition-5-AB} There exists a uniform constant $C$ such that $p_{xx}(x, t) \ge -\frac{1}{t} - C$ in the sense of distribution.
\end{proposition}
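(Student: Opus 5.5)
We prove the Aronson--B\'enilan bound by a weak parabolic maximum principle applied to a lower barrier for $w := p_{xx}$, after regularizing the equation so that all computations are classical.

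\textbf{Regularization.} We work with smooth positive approximations: take $\varrho_0^\eta := \varrho_0 + \eta$ (mollified), solve~\eqref{2-6-pressure} (or its uniformly parabolic version $\dot p = (m-1)(p+\eta)p_{xx} + \dots$) on a large interval or the torus, and pass to the limit $\eta \to 0$ at the end. A lower bound for $p_{xx}$ is preserved under convergence in distributions, so it suffices to prove the estimate for smooth positive solutions with constants independent of $\eta$. Throughout, we use the uniform $L^\infty$ bound of Proposition~\ref{proposition-4-existence} together with $W' \in C^\infty_b$ (or the analogous local bounds) to control
\begin{equation*}
V = W' \ast \varrho \quad\text{and}\quad V_x,\ V_{xx},\ V_{xxx}
\end{equation*}
uniformly in $L^\infty$. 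The bounds on $p$ and $p_x$ come from the uniform $L^\infty$ bound and a Bernstein-type gradient estimate.

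\textbf{The equation for $w$.} Differentiating~\eqref{2-6-pressure} twice in $x$ gives
\begin{equation*}
\dot w = (m-1)p\,w_{xx} + \big(2mp_x + V\big) w_x + (m+1) w^2 + (m+1)V_x w + (m-1)\big(2p_x V_{xx} + pV_{xxx}\big).
\end{equation*}
Here the terms $p_x^2$ and $p_x V$ contribute $2p_x w_x + 2w^2$ and $V w_x + 2V_x w + V_{xx} p_x$. The zero-order term is bounded below by $-K$, where $K$ depends on $\|p\|_\infty$, $\|p_x\|_\infty$ and the $V$-bounds. Since $p \ge 0$, the operator is (degenerate) parabolic.

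\textbf{The barrier.} We compare with the spatially constant barrier
\begin{equation*}
\underline w(t) = -\frac{1}{t} - C .
\end{equation*}
For this to be a subsolution we need
\begin{equation*}
\dot{\underline w} \le (m+1)\underline w^2 + (m+1)V_x \underline w - K .
\end{equation*}
The key point is that $(m+1)\underline w^2 \ge \tfrac{m+1}{t^2} \ge \tfrac{1}{t^2} = \dot{\underline w}$, so the $1/t^2$ terms are absorbed. Expanding the square also gives the terms $2(m+1)C/t + (m+1)C^2$, and these dominate the linear term $(m+1)\|V_x\|_\infty\,(1/t + C)$ and the constant $K$ once $C$ is chosen large in terms of $\|V_x\|_\infty$ and $K$.

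\textbf{Comparison argument.} At $t = 0^+$ the barrier is $-\infty$, so $w \ge \underline w$ initially. Suppose $w - \underline w$ attains its first negative minimum at some point. There $w_{xx} \ge 0$ and $w_x = 0$. Moreover $w < \underline w < 0$, so $w^2 > \underline w^2$, and also $V_x w \ge -\|V_x\|_\infty |w|$. Combining these with the subsolution inequality shows the time derivative of $w - \underline w$ is positive at that point, a contradiction. This gives $p_{xx} \ge -\tfrac1t - C$ for the regularized problem uniformly in $\eta$, and hence in distribution for the limit.

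\textbf{Main obstacle.} The hard step is the uniform control of $p_x$ that enters $K$: we need a Lipschitz bound for the pressure independent of $\eta$. We would obtain it first, by applying the same maximum-principle technique to $p_x^2$ (or to $p_x$) using the equation for $p_x$. If only local bounds on $W'$ are available, we would also need a bound on the support radius to justify the uniform bounds on $V$ and its derivatives.
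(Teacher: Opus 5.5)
\textbf{Overall.} Your scheme (differentiate twice, then compare $w=p_{xx}$ with the spatially constant barrier $-\frac1t-C$ at a first touching time, after regularization) is the weak maximum principle argument the paper invokes through \cite[Proposition 2.3]{zhang2022}. Your equation for $w$ is correct except for a harmless coefficient: the $p_xV_{xx}$ term carries $2m-1$, not $2(m-1)$, because $p_xV$ contributes one copy.

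\textbf{The gap.} It is the step you flag yourself. The zero-order term contains $(2m-1)p_xV_{xx}$, so $K$ needs a bound on $\|p_x(t)\|_{L^\infty}$, and the route you propose for that bound fails. With $z=p_x^2$, the equation for $p_x$ gives
\begin{equation*}
\dot z=(m-1)p\,z_{xx}-2(m-1)p\,p_{xx}^2+\big((m+1)p_x+V\big)z_x+2mV_xz+2(m-1)p\,p_xV_{xx}.
\end{equation*}
At a maximum of $z$ you therefore only get $\dot z\le 2m\|V_x\|_{L^\infty}z+C\sqrt z$. The one good term, $-2(m-1)p\,p_{xx}^2$, degenerates where $p$ vanishes and gives no control near the free boundary. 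At best this yields $\|p_x(t)\|_{L^\infty}\le e^{Ct}(\|\partial_xp_0\|_{L^\infty}+C)$, which has two problems:
\begin{enumerate}
\item it requires a Lipschitz initial pressure, whereas Assumption~\ref{assumption-2} only gives $p_0$ continuous;
\item it grows in $t$, so your barrier constant $C$ would depend on time.
\end{enumerate}
Uniformity in time is exactly what the paper needs later, for instance the constant $C_{\mathrm{AB}}$ used at the arbitrarily large time $t_R$ in the proof of Proposition~\ref{proposition-8-trapping}.

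\textbf{One way to close it.} Reverse the logic: in one dimension, Lipschitz control follows from semiconvexity. If $0\le p\le M$ on $\mathbb{R}$ and $p_{xx}\ge-\Lambda$, then for $g=p_x(x)>0$ and $h=g/\Lambda$,
\begin{equation*}
M\ge p(x+h)\ge p(x)+gh-\tfrac12\Lambda h^2\ge \frac{g^2}{2\Lambda}.
\end{equation*}
The case $g<0$ is symmetric, so $|p_x|\le\sqrt{2M\Lambda}$. At the first touching time $t_1$, the bound $p_{xx}(\cdot,t_1)\ge-\Lambda$ with $\Lambda=\frac1{t_1}+C$ holds everywhere. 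Hence the zero-order term is $\ge-A\sqrt\Lambda-B$, where $A,B$ depend only on $m$, $M$ and $\|W^{(k)}\|_{L^\infty}$ for $2\le k\le 4$. Since $\dot{\underline w}(t_1)=(\Lambda-C)^2<\Lambda^2$, at the touching point
\begin{equation*}
\partial_t(w-\underline w)\ge m\Lambda^2-(m+1)\|V_x\|_{L^\infty}\Lambda-A\sqrt\Lambda-B>0\quad\text{for all }\Lambda\ge C
\end{equation*}
once $C$ is large. This contradicts $\partial_t(w-\underline w)\le 0$ there. The uniform Lipschitz bound then comes out as a corollary of the estimate rather than a prerequisite.

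\textbf{A smaller issue.} Regularizing by $\varrho_0+\eta$ on $\mathbb{R}$ gives infinite mass, so $W'\ast\varrho^\eta$ is meaningless. One standard remedy is to freeze $V=W'\ast\varrho$ from the actual solution and regularize the resulting local drift--diffusion equation. You would then check that $\sup p^\eta$ stays bounded independently of $\eta$ on bounded time intervals, so that $C$ does not depend on the interval.
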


The following result is the consequence of \cite[Lemma 3.4]{zhang2022}, which states that interior vacuum can not be formed for attraction kernel.
\begin{proposition}[interior positivity]For initial value obeying Assumption~\ref{assumption-2}, the support of the solution is a closed interval:
\begin{equation*}
    \Omega(t) = [a(t), b(t)]
\end{equation*}
for some $C^{2, 1}_{\mathrm{loc}}((0, \infty))$ end-point functions $a(t)$ and $b(t)$.
\end{proposition}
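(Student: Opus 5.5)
We need to prove that the support of $\varrho(t)$ is the closed interval $[a(t),b(t)]$ with $C^{2,1}_{\mathrm{loc}}$ endpoints. The argument has three parts: first show the positivity set is connected, then obtain regularity of the pressure up to the free boundary, and finally read off regularity of the endpoints from the free-boundary condition.

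\emph{Connectedness.} Suppose an interior vacuum forms. Take the first time $t_1$ at which $\{p(\cdot,t_1)=0\}$ contains a point $x_0$ lying strictly between two points of positivity. Near $x_0$ the pressure equation~\eqref{2-6-pressure} is uniformly parabolic away from $\{p=0\}$, and $V=W'\ast\varrho$ is smooth. Because $W$ is increasing and symmetric, the drift points inward toward the mass. At a zero of $p$ the equation reduces to
\begin{equation*}
\dot p = p_x^2 + p_x V.
\end{equation*}
Two tools rule out a touching-down at $x_0$ at time $t_1$:
\begin{itemize}
\item the Aronson--B\'enilan bound (Proposition~\ref{proposition-5-AB}), which gives semiconvexity $p_{xx}\ge -1/t-C$;
\item a barrier comparison with local subsolutions of the form $\big(c(t)-\kappa(x-x_0)^2\big)_+$, using the $L^\infty$ bound from Proposition~\ref{proposition-4-existence}.
\end{itemize}
The semiconvexity excludes cusp-like zeros in the interior. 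The barrier shows that an interior minimum of $p$ which is positive at time $t<t_1$ cannot reach zero in finite time, because the terms $(m-1)pV_x$ and $p_xV$ are controlled linearly in $p$. This is the content of \cite[Lemma 3.4]{zhang2022}, and I would cite it, adapting it to the present kernel class via Assumption~\ref{assumption-1}. With connectedness and compact support (finite propagation), $\Omega(t)=[a(t),b(t)]$.

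\emph{Regularity of the pressure.} The initial non-degeneracy condition (Assumption~\ref{assumption-2} with $\gamma<2$) propagates. By comparison with traveling-wave or Barenblatt-type subsolutions, $p_x(b(t)^-,t)<0$ for $t>0$, i.e.\ $p$ has a nonzero slope at the free boundary. Given this, perform the hodograph (Lagrangian) change of variables: set $x=X(p,t)$ near $b(t)$, or use mass coordinates. This turns~\eqref{2-6-pressure} into a uniformly parabolic equation on a fixed domain with a degenerate weight of the standard Daskalopoulos--Hamilton type. Then:
\begin{itemize}
\item the $C^{1,\alpha}$ theory of Proposition~\ref{theorem:6-regularity} gives $p\in C^{1,\alpha}$ up to the boundary;
\item bootstrapping in the transformed variables, with $V$ smooth, upgrades this to $C^{2,1}$.
\end{itemize}

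\emph{Regularity of the endpoints.} The free boundary moves by Darcy's law,
\begin{equation*}
\dot b(t) = -p_x(b(t),t) - V(b(t),t),
\end{equation*}
and similarly for $a$. From this, $a,b\in C^{2,1}_{\mathrm{loc}}((0,\infty))$ follows from the regularity of $p$ up to the boundary.

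\emph{Main obstacle.} The hardest step is the non-degeneracy of $p_x$ at the boundary, uniformly on compact time intervals. I would try first a comparison of $p$ with a linear-in-distance subsolution near each endpoint. The semiconvexity of Proposition~\ref{proposition-5-AB} prevents the slope from collapsing, since a concave-up pressure with positive interior values must leave the boundary with a definite slope. Once this non-degeneracy holds, the hodograph bootstrap is routine.
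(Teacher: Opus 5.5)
Your connectedness step matches the paper: its whole proof is the citation of \cite[Lemma 3.4]{zhang2022}. The mechanism there is a minimum principle. At an interior local minimum $p_x=0$ and $p_{xx}\ge 0$, so \eqref{2-6-pressure} gives $\dot p\ge (m-1)V_x\,p$. The term $p_xV$ is not ``controlled linearly in $p$''; it vanishes at the minimum, and semiconvexity plays no role. The genuine gaps are in the regularity part, which you try to prove rather than cite.

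\textbf{Nondegeneracy.} Your proposed source of boundary nondegeneracy does not work, because $p_{xx}\ge -1/t-C$ is fully compatible with $p_x(b(t)^-,t)=0$. The convex profile $(b-x)_+^2$ is positive inside and has zero slope at $b$. Porous-medium waiting-time solutions satisfy the Aronson--B\'enilan bound while their boundary slope vanishes for a positive time, and the interface can even form a corner when the waiting time ends. Semiconvexity only turns a slope already known to be nonzero into a pressure layer, as in \eqref{5-4-boundary}. Nondegeneracy for every $t>0$ has to come from the hypothesis $\gamma<2$, which excludes a waiting time, combined with a forward-propagation estimate such as \cite[Theorem 4.1]{zhang2022}, i.e.\ \eqref{4-5-pressure}.

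\textbf{Circularity.} You cannot invoke Proposition~\ref{theorem:6-regularity} here. In the paper it is derived from Lemma~\ref{proposition-12-up} and Proposition~\ref{proposition-4-long-time}. Those rest on Dirac's law, uniform trapping and qualitative convergence, all phrased in terms of the endpoints $a(t),b(t)$ that you are constructing. It is also a large-time statement, so it says nothing on compact subintervals of $(0,\infty)$ near $t=0$. Rerunning its method locally in time again presupposes the nondegeneracy above. Moreover, the hodograph equation (cf.\ \eqref{4-5-A}) is still degenerate at $z=0$; it is not uniformly parabolic.

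\textbf{Derivative count.} The last step loses derivatives. By Darcy's law $\ddot b$ involves $\partial_t p_x$ along the boundary, so $b\in C^{2,1}$ requires $t\mapsto p_x(b(t),t)$ to be $C^{1,1}$. Parabolic $C^{2,1}$ regularity of $p$ only makes $p_x$ H\"older ($C^{1/2}$) in time, which yields merely $b\in C^{1,\beta}$. The stated endpoint regularity needs smoothness of $p$ up to the free boundary, i.e.\ a weighted Schauder theory of Daskalopoulos--Hamilton type in hodograph variables. That is the substance of the result, not a routine bootstrap.
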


The following Dirac's laws \cite[Lemma 3.5]{zhang2022} states the motion of free boundary (or support boundary) under the balances between pressure slope and attraction.
\begin{proposition}[Dirac's law]\label{proposition-7-dirac}For every $t > 0$, the following limits exist,
\begin{equation*}
    p_{x}(a(t)^{+}, t) := \lim_{x \to a(t)^{+}} p_{x}(x, t),\quad D^{+}a(t) := \lim_{\tau \to 0^{+}} \frac{a(t + \tau) - a(t)}{\tau}
\end{equation*}
and the motion of free boundary obeys the Dirac's law:
\begin{equation*}
    D^{+}a(t) = -p_{x}(a(t)^{+}, t) - V(a(t),t),\quad \text{ where } V = W^{\prime} \ast \varrho
\end{equation*}
The same argument also applied to the right boundary $b(t)$.
\end{proposition}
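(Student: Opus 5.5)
The plan is to use the Aronson--B\'enilan estimate (Proposition~\ref{proposition-5-AB}) to get the one-sided limit of the pressure slope, and then to derive the boundary velocity by comparing $p$ with explicit barriers for the pressure equation~\eqref{2-6-pressure}. I treat only the left endpoint $a(t)$; the right endpoint $b(t)$ is symmetric.

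\textbf{Step 1: existence of $p_{x}(a(t)^{+},t)$.} Fix $t>0$. By Proposition~\ref{proposition-5-AB}, on $(a(t),b(t))$ the function $x\mapsto p_{x}(x,t)+(\tfrac1t+C)x$ is nondecreasing in the distributional sense. Hence it has a right limit at $a(t)$ with values in $[-\infty,\infty)$. Since $p(a(t),t)=0$ and $p>0$ inside the support (interior positivity), $p_{x}$ cannot tend to $-\infty$, so the limit is at least $0$. It is also finite, using the Lipschitz bound on $p$ that follows from the uniform $L^\infty$ bound and the parabolic regularity of~\eqref{2-6-pressure} in \cite{zhang2022}. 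Call $\sigma:=p_{x}(a(t)^{+},t)\ge 0$. The same monotonicity gives a quantitative two-sided expansion near the boundary:
\begin{equation*}
p(x,t)\ge \sigma(x-a(t))-K(x-a(t))^{2},\qquad p(x,t)\le (\sigma+o(1))(x-a(t)),\quad x\downarrow a(t).
\end{equation*}

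\textbf{Step 2: the formal law and the barriers.} Formally, on the zero level set $p=0$ equation~\eqref{2-6-pressure} reduces to $\dot p=p_{x}^{2}+p_{x}V$. The level set therefore moves with normal speed $-\dot p/p_{x}=-p_{x}-V$, which is the claimed law. To make this rigorous, fix $t_{0}>0$ and $\eta>0$, and use the smoothness of $V=W'\ast\varrho$ to freeze $V\approx V(a(t_0),t_0)$ in a small space-time box. In that box I construct affine-in-space traveling profiles
\begin{equation*}
\phi_{\pm}(x,t)=(\sigma\pm\eta)\big(x-a(t_{0})+(\sigma+V(a(t_0),t_0)\pm 2\eta)(t-t_{0})\big)_{+}\mp \eta^{-1}\text{-scale quadratic corrections}.
\end{equation*}
The signs are chosen so that $\phi_{-}$ is a subsolution and $\phi_{+}$ a supersolution of~\eqref{2-6-pressure} in the box. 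This works because the $(m-1)pp_{xx}$ and $(m-1)pV_{x}$ terms are $O(p)$ and are absorbed by the slope perturbation $\eta$. The expansion from Step 1 places $\phi_{-}\le p\le\phi_{+}$ at time $t_{0}$ near $a(t_{0})$. On the lateral boundary of the box the same ordering follows from continuity of $p$.

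\textbf{Step 3: comparison and conclusion.} The comparison principle for the degenerate equation~\eqref{2-6-pressure} (in the viscosity or weak sense used in \cite{zhang2022}) keeps $\phi_{-}\le p\le \phi_{+}$ for $t\in[t_{0},t_{0}+\tau]$. The supports are then ordered, which gives
\begin{equation*}
-\sigma-V-2\eta\le \liminf_{\tau\to0^{+}}\frac{a(t_{0}+\tau)-a(t_{0})}{\tau}\le\limsup_{\tau\to0^{+}}\frac{a(t_{0}+\tau)-a(t_{0})}{\tau}\le -\sigma-V+2\eta.
\end{equation*}
Letting $\eta\to0$ shows that $D^{+}a(t_{0})$ exists and equals $-p_{x}(a(t_0)^{+},t_0)-V(a(t_0),t_0)$.

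\textbf{Main obstacle.} I expect the hard part to be the degenerate case $\sigma=0$ together with the comparison argument near a free boundary. When $\sigma=0$, the subsolution $\phi_-$ has negative slope and must be replaced by the trivial subsolution. The lower bound on the speed then has to come from a different barrier: a small-slope supersolution controlling the advance of vacuum, combined with the upper expansion $p\le o(1)(x-a)$. The point is that the boundary is transported only by the drift $-V$. Checking that the quadratic corrections keep $\phi_{\pm}$ as strict sub- and supersolutions uniformly in the box is routine but delicate, because of the $p_{x}^{2}$ nonlinearity.
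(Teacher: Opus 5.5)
The paper does not prove this statement; it imports it from \cite[Lemma 3.5]{zhang2022}. Your Aronson--B\'enilan plus barrier scheme is therefore a self-contained reconstruction, and for $\sigma>0$ it works. The quadratic corrections are not needed. For $\phi=s\,(x-\xi(t))_+$ one has on $\{\phi>0\}$
\[
\phi_t-\big[(m-1)\phi\phi_{xx}+\phi_x^2+\phi_xV+(m-1)\phi V_x\big]=s\big(-\dot\xi-s-V\big)-(m-1)\phi V_x ,
\]
which is linear in $s$. With $-\dot\xi=s+V_0\pm\eta$ and $\phi\le s\ell$ on the box, the sign is fixed once $|V-V_0|+(m-1)\ell\|V_x\|_\infty<\eta$ there, independently of $s$. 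One small repair: on the vacuum side, $\phi_\pm$ vanish at $x=a_0-r$, and continuity of $p$ does not give $p(a_0-r,t)=0$. Compare on $(-\infty,a_0+r)$ instead; this costs nothing, since the barriers are trivially sub- and supersolutions where they vanish.

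The genuine gap is the degenerate case $\sigma=0$. The statement (``for every $t>0$'') must cover it, and Proposition~\ref{proposition-5-AB} does not exclude it, e.g.\ $p(\cdot,t_0)\sim(x-a_0)^2$. Your $\phi_+$ with slope $\eta$ still gives $\liminf\ge -V_0-2\eta$ in that case. What is lost with $\phi_-$ is the upper bound $\limsup_{\tau\to0^+}(a(t_0+\tau)-a(t_0))/\tau\le -V_0$, i.e.\ that vacuum cannot invade faster than the drift.

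The remedy you sketch cannot supply this bound. The inequality $p\le\phi_+$ only gives $\{p>0\}\subset\{\phi_+>0\}$, which is a \emph{lower} bound on $a(t)$. An upper expansion $p\le o(1)(x-a)$ can never force $p>0$ anywhere. You need a nontrivial \emph{sub}solution lying under $p$. Since no profile with positive slope at $a_0$ fits under a quadratically degenerate $p$, the subsolution must start inside the support.

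Here is one way to do it.
\begin{enumerate}
\item Fix $r,\tau$ by the frozen-drift condition, and so that $p(a_0+r,t)\ge\tfrac12 p(a_0+r,t_0)>0$ on $[t_0,t_0+\tau]$.
\item Given $\varepsilon>0$, use $p(\cdot,t_0)\ge c_\varepsilon>0$ on $[a_0+\varepsilon,a_0+r]$ to pick $\delta=\delta(\varepsilon)$ so small that $\phi_-=\delta\big(x-a_0-\varepsilon+(\delta+V_0-\eta)(t-t_0)\big)_+$ lies below $p$ at $t=t_0$ and on $\{x=a_0+r\}$.
\item Because the defect is linear in the slope, $r$ and $\tau$ do not depend on $\varepsilon$ or $\delta$.
\item Comparison then gives $a(t)\le a_0+\varepsilon+(\eta-V_0)(t-t_0)$ on $[t_0,t_0+\tau]$. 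Let $\varepsilon\to0$, then $\eta\to0$.
\end{enumerate}
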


\subsection{Long-time properties}\label{sec:2-3-long}
In this part, we prove important long-time properties for establishing the quantitative convergence. A numerical illustration of results in this section is given in Figure~\ref{fig:1-illustration}. Proofs of Propositions~\ref{proposition-8-trapping} and~\ref{proposition-4-long-time} are postponed to Section~\ref{sec:5-2-flow}. 

\begin{figure}
    \centering
    \includegraphics[width = 0.9\linewidth]{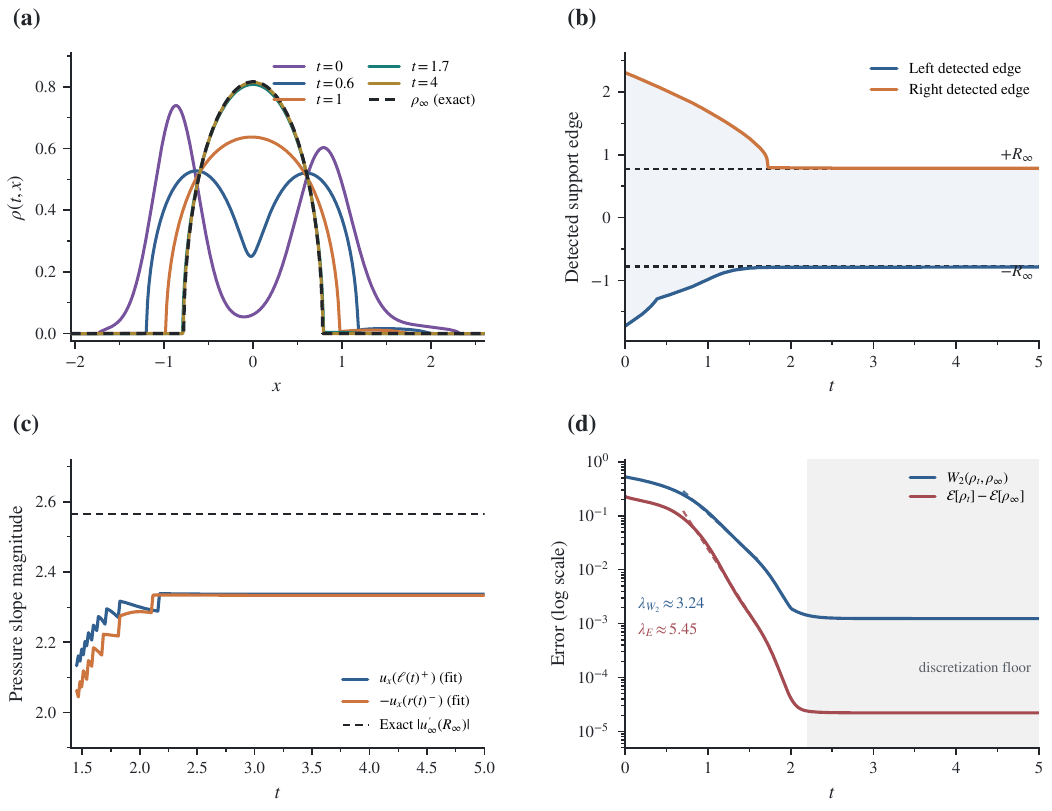}
    \caption{\small Numerical illustration of the uniform trapping, long-time non-degeneracy, and uniform interior positivity in Section~\ref{sec:2-3-long}. We consider a aggregation-diffusion with $m = 3$, and log-kernel $W(x) = \log(|x|)$. Figure (a) shows that any interior minimal will vanish in finite-time, and one can get a uniform interior positivity. Figure (b) shows that the support of the solution remains uniformly bounded. Figure (c) records the gradient of pressure near free boundary, and showed that it stays uniformly positive. Figure (d) shows the quantitative convergence towards steady state and quantitative decay of energy.}\label{fig:1-illustration}
\end{figure}

\begin{proposition}[uniform trapping]\label{proposition-8-trapping}For aggregation kernels $W$ satisfying Assumption~\ref{assumption-1}, the bulk domain $\Omega(t) := [a(t), b(t)]$ is uniformly bounded
\end{proposition}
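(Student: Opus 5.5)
We bound the support by combining energy dissipation (which controls mass far from the center) with the Dirac law of Proposition~\ref{proposition-7-dirac} at the endpoints. Throughout we use the conservation of mass and of the mass center, $\int_{\mathbb{R}} x \ud\varrho(t) = 0$.

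\emph{Step 1 (a priori mass localization).} Since $\mathcal{F}(\varrho(t)) \le \mathcal{F}(\varrho_0)$ and the diffusion term is nonnegative, the interaction energy satisfies $\iint W(x - y)\ud\varrho(t)\ud\varrho(t) \le C_0$. Because $W$ is increasing in $|x|$ and $W \to \infty$, this yields a uniform radius $R_1$ such that $\varrho(t)([-R_1, R_1]) \ge 3/4$ for all $t$. To see this, fix a point $y$ with most of the mass near it; then $W(x-y)$ is large wherever $|x - y|$ is large, and the zero mass center forces $y$ to lie near $0$. The doubling hypothesis $W(\lambda x) \lesssim \lambda^p W(x)$ then bounds the tail moment $\int W(x)\ud\varrho(t)$ uniformly in time. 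Together with the uniform $L^\infty$ bound from Proposition~\ref{proposition-4-existence}, this gives quantitative control on how much mass can lie beyond distance $L$.

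\emph{Step 2 (endpoint drift).} Consider the right endpoint $b(t)$. Since $p \ge 0$ inside the support and vanishes at $b(t)$, we have $p_x(b(t)^-, t) \le 0$. The Dirac law then gives
\[
D^+ b(t) = -p_x(b^-, t) - V(b, t).
\]
The attraction term $V(b, t) = \int W'(b - y)\ud\varrho(y)$ is at least of order $\tfrac34 \inf_{|y|\le R_1} W'(b - y)$, which is positive once $b > R_1$. The competing pressure slope $|p_x(b^-)|$ is the main obstacle. Integrating the Aronson--B\'enilan bound of Proposition~\ref{proposition-5-AB} from $b$ inward, $p(x) \ge |p_x(b^-)|(b - x) - \tfrac12 (1/t + C)(b - x)^2$. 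Hence a large slope forces a lot of pressure, and so a lot of mass, near $b$:
\[
\int_{b-h}^{b} \varrho \ud x \gtrsim h \,\bigl(|p_x| h\bigr)^{1/(m-1)} \quad \text{with } h \sim |p_x|/(1/t + C).
\]
This mass sits near $b$, far from the origin. It therefore contributes at least $\mathrm{mass} \times W(b - R_1)$ to the interaction energy, which contradicts the bound $C_0$ unless $|p_x|^{\frac{m}{m-1}+1} W(b/2) \lesssim 1$.

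\emph{Step 3 (comparison and case split).} From Step 2,
\[
D^+ b \le C\, W(b/2)^{-\frac{m-1}{2m}} - c\, W'(b/2).
\]
Under Assumption~\ref{assumption-1}, case~(3), $W\, W'^{\frac{m+1}{m-1}} \to \infty$, or a suitable power comparison in this spirit, guarantees that the attraction term dominates for $b$ large, so $D^+ b < 0$ once $b \ge L$. I expect the exact exponent matching to be delicate and to need tuning. In case~(2), $W/x \to \infty$ gives the localization of Step 1 directly at a linear scale. In case~(1) with symmetric data, symmetry is preserved, so $a = -b$ and the mass is centered at $0$; this gives a better lower bound on $V(b)$ and the weaker exponent suffices. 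A barrier argument with the one-sided derivative then yields $b(t) \le \max(b(0), L)$, and by symmetry of the argument $a(t) \ge \min(a(0), -L)$.

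\emph{Short times.} For $t \le 1$, where the Aronson--B\'enilan constant $1/t$ degenerates, we instead use finite propagation: by the nondegeneracy in Assumption~\ref{assumption-2} and the bounded drift, $b(t)$ is continuous and bounded on $[0, 1]$. This finishes the argument.
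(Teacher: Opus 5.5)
Your Steps 2--3 (Dirac's law at the endpoint, an Aronson--B\'enilan pressure layer, and the energy bound on the layer mass) are essentially the paper's argument for case (3). However, the proposal has two genuine gaps.

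\textbf{First gap: Step 1 fails in case (3).} You claim a time-uniform localization $\varrho(t)([-R_1,R_1])\ge 3/4$ because ``the zero mass center forces $y$ to lie near $0$''. This does not follow from the energy bound when $W$ is sub-linear, which is exactly the non-symmetric case (3) where you need it. A mass $\epsilon\sim 1/W(L)$ placed at distance $L$ costs bounded interaction energy. Its first moment, however, is $\epsilon L\sim L/W(L)\to\infty$, so it can balance a bulk sitting far from $0$. (In case (2), $|x-m|\lesssim W(|x-m|)$ does pin the median, and in case (1) symmetry does.)

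The paper replaces this with a relative bound at the first time $t_R$ at which the support touches $\pm R$. Then $\Omega(t_R)\subset[-R,R]$, so $|m(t_R)|\le\int|x-m(t_R)|\ud\varrho\le R/8+16RE_0/W(R/8)\le R/4$ (Lemma~\ref{lemma-10-media}). This suffices to put half the mass in $[-R/2,R/2]$, to get $V(R,t_R)\ge\frac12\inf_{(R/2,3R/2)}W'$, and to keep the pressure layer at distance $\ge R/2$ from the median. Your differential inequality for $D^+b$ at arbitrary times should be recast in this first-hitting-time form.

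You should also correct the exponents. The layer carries mass $\gtrsim|p_x|^{\frac{m+1}{m-1}}$, so $|p_x(b^-)|\lesssim W(b/2)^{-\frac{m-1}{m+1}}$, which is exactly the rate hypothesis (3) beats. With the exponent $-\frac{m-1}{2m}$ you wrote, the argument would not close under (3).

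\textbf{Second gap: case (1) is not covered.} Symmetry pins the median, but it does not improve the attraction at the endpoint, where all the mass lies on one side anyway. The layer estimate is also unchanged. So your mechanism still needs $W(W')^{\frac{m+1}{m-1}}\to\infty$, and a smoothed $|x|^q$ with $\frac12<q\le\frac{m+1}{2m}$ satisfies $WW'\to\infty$ yet is out of reach.

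The paper uses a different idea here, and also in case (2): it never estimates $p_x$. Instead it bounds the drift from below on the whole far field,
$V(x,t)\ge\frac12K_{R_*}(x)-8\|W'\|_{L^\infty}E_0/W(x)\ge\frac14K_{R_*}(x)=:V_0(x)$ for $x\ge\alpha$.
The outward pull of the tail beyond $x$ is only linear in $1/W(x)$, and this is where $WW'\to\infty$ enters. The paper then compares the tail mass $F(x,t)=\int_x^\infty\varrho$, which is a subsolution of $F_t=m(-F_x)^{m-1}F_{xx}+V_0F_x$, with the stationary barrier solving $\partial_x\varrho^m+V_0\varrho=0$. That barrier is compactly supported because $\int^\infty V_0=\infty$.
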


Based on this uniform trapping mechanism, one can then establish the following long-time non-degeneracy according to \cite[Theorem 4.1]{zhang2022}.
\begin{proposition}[uniform non-degeneracy]\label{proposition-4-long-time}Let $p(t)$ be the pressure function of $\varrho(t)$, we have
\begin{equation*}
    p_{x}(a(t)^{+}, t) \ge c(t_{0}),\quad -p_{x}(b(t)^{-}, t) \ge c(t_{0})
\end{equation*}
with a constant $c(t_{0})$ after a finite time $t_{0}$.
\end{proposition}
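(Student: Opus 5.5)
The plan is to follow Zhang's barrier argument \cite[Theorem 4.1]{zhang2022}. The key change is that the finite-time, support-dependent constants there are made uniform in time. Proposition~\ref{proposition-8-trapping} is what supplies this uniformity. By trapping, $\Omega(t)\subset[-R_0,R_0]$ for all $t$. Since $\varrho(t)$ is a probability measure and $W$ is smooth, the drift $V=W'\ast\varrho$ and its derivatives $V_x,V_{xx}$ are then bounded on $[-R_0,R_0]$ uniformly in $t$. The $L^\infty$ bound of Proposition~\ref{proposition-4-existence} bounds $p$ uniformly. Proposition~\ref{proposition-5-AB} gives, for $t\ge 1$, a time-independent semiconvexity bound $p_{xx}\ge -1-C$.

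The first step is a uniform interior positivity statement. I would show that for some $t_1$ there are $\eta,\delta>0$ with $p(x,t)\ge\eta$ whenever $x\in[a(t)+\delta,b(t)-\delta]$ and $t\ge t_1$. One way is to use the qualitative convergence $\varrho(t)\to\varrho_\ast$ from LaSalle's principle \cite{carrillo2016nonlinear}, together with compactness coming from the uniform bounds. The positivity of $\varrho_\ast$ in its interior (Proposition~\ref{proposition-3-steady}) then transfers to $p(t)$ on compact subintervals. For this one needs uniform-in-time Hölder continuity of $p$, which follows from the bounded-coefficient quasilinear equation~\eqref{2-6-pressure} via standard degenerate parabolic estimates. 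The semiconvexity then upgrades interior positivity to a lower bound near the boundary. Since $p(a(t),t)=0$ and $p\ge\eta$ at $a(t)+\delta$, semiconvexity forces
\[
p_x(a(t)^+,t)\ \ge\ \frac{\eta}{\delta}-\frac{(1+C)\delta}{2}.
\]
If $\delta$ can be taken small relative to $\eta$ (choose $\delta$ comparable to $\sqrt{\eta}$ scaled), this is positive.

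I expect the main obstacle to be the quantitative balance between $\eta$ and $\delta$ in that inequality. Interior positivity near the boundary must beat the semiconvexity deficit. Here I would use the non-degeneracy $\varrho_\ast\sim\delta^{1/(m-1)}$ of the steady state, i.e.\ $p_\ast\sim\mathrm{dist}$ linearly. This gives $\eta\gtrsim\delta$ for the limit profile. Convergence then gives $\eta\ge c\delta$ at time $t$, up to errors that become small as $t$ grows. The right-hand side is then at least $c-(1+C)\delta/2$, and choosing $\delta$ small makes it at least $c/2$.

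If boundary convergence is too weak for this, the fallback is Zhang's approach. I would compare $p$ from below with a subsolution barrier of the form $\phi(x,t)=A\,(x-a(t))_+ - B(x-a(t))^2$ near the left endpoint. The barrier is evaluated with the uniform bounds on $V,V_x$ in~\eqref{2-6-pressure}, and the Dirac law of Proposition~\ref{proposition-7-dirac} controls the motion of $a(t)$. This is the step where the trapping proposition is essential, since it keeps the constants $A,B$ independent of $t$. The right endpoint $b(t)$ is handled by the reflection $x\mapsto -x$.
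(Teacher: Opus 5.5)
Your main route fails at the semiconvexity step, because the inequality runs the wrong way. The Aronson--B\'enilan bound $p_{xx}\ge -K$ (with $K=1+C$, $t\ge 1$) makes $p+\frac K2(x-a(t))^2$ convex. So $p$ lies above its tangent line at $a(t)$ minus $\frac K2(x-a(t))^2$, and with $p(a(t),t)=0$ this gives
\[
p(a(t)+\delta,t)\ \ge\ p_x(a(t)^+,t)\,\delta-\tfrac K2\delta^2,\qquad\text{i.e.}\qquad p_x(a(t)^+,t)\ \le\ \frac{p(a(t)+\delta,t)}{\delta}+\frac{K\delta}{2}.
\]
This is an \emph{upper} bound on the boundary slope, which is how the paper uses it in \eqref{5-4-boundary}. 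Your bound $p_x(a(t)^+,t)\ge \eta/\delta-K\delta/2$ would need semiconcavity $p_{xx}\le K$, which is not available. A concrete counterexample is the profile equal to $\frac{c}{2\epsilon}(x-a)^2$ on $[a,a+\epsilon]$ and to $c(x-a)-\frac{c\epsilon}{2}$ beyond. It is convex, within $c\epsilon/2$ of the non-degenerate profile $c(x-a)$ in sup norm, and has zero slope at $a$ (the typical waiting-time shape). So interior positivity, uniform convergence and Aronson--B\'enilan together cannot bound $p_x(a(t)^+,t)$ from below, whatever the balance of $\eta$ and $\delta$. You also cannot fall back on $C^1$ convergence up to the boundary, since the $C^{1,\beta}$ estimate of Proposition~\ref{theorem:6-regularity} is proved using exactly this non-degeneracy. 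Separately, $\eta\gtrsim\delta$ at distance $\delta$ from $a(t)$ presupposes $a(t)\to -R$. Uniform convergence of densities does not give this, since a thin low-density tail is compatible with it.

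The fallback, as written, does not close the gap either. A barrier $A(x-a(t))_+-B(x-a(t))^2$ anchored at the true free boundary lies below $p$ at the starting time only if the slope there is already $\ge A$. Uniform constants therefore only reproduce Zhang's estimate $p_x(a(t)^+,t)\ge e^{-\sigma(t-t_0)}p_x(a(t_0)^+,t_0)$, which decays in time. What is missing is the mechanism the paper uses to stop this decay:
\begin{enumerate}
\item Trapping and the $L^\infty$ bound $M$ put at least half the mass at distance $\ge\varepsilon_0=1/(2M)$ from $a(t)$. Hence $V(a(t),t)\le-\frac12\Lambda$ with $\Lambda=\inf_{(\varepsilon_0,2R_\ast)}W'$, uniformly in $t$.
\item Suppose $p_x(a(t)^+,t)\le\frac14\Lambda e^{-\sigma\tau}$. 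Zhang's estimate read backward gives $p_x(a(s)^+,s)\le\frac14\Lambda$ for $s\in[t-\tau,t]$.
\item Dirac's law then gives $D^+a\ge\frac14\Lambda$ on that interval, so $a$ moves inward by at least $\frac14\Lambda\tau$. Trapping forbids this once $\tau$ exceeds a fixed multiple of $R_\ast/\Lambda$.
\end{enumerate}
Hence $p_x(a(t)^+,t)\ge\frac14\Lambda e^{-\sigma\tau_0}$ for large $t$. Trapping enters here not merely to make constants uniform, but to rule out prolonged inward drift of the free boundary. The uniform lower bound on the inward attraction at the boundary is the key quantity your proposal never uses.
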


We next show the following $W_2$ qualitative convergence for aggregation-diffusion equation~\eqref{0-1}.
\begin{lemma}\label{prop-19-qual}Let $\varrho(t)$ be the solution of aggregation-diffusion equation~\eqref{0-1} started from $\varrho_{0}$, we have
\begin{equation*}
    \varrho(t) \to \varrho_{\ast} \text{ in } W_{2} \text{ as } t\to \infty.
\end{equation*}
where $\varrho_\ast$ is the unique steady state.
\end{lemma}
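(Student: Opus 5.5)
The argument is a LaSalle-type invariance principle: we combine the energy dissipation of the gradient flow, the compactness given by uniform trapping, and the uniqueness of the steady state in Proposition~\ref{proposition-3-steady}.

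\emph{Step 1: energy dissipation.} Along the solution the energy identity reads
\begin{equation*}
    \mathcal{F}(\varrho(t)) + \int_{0}^{t} \int_{\mathbb{R}} \big|D_{W}\mathcal{F}(\varrho(s))\big|^{2} \ud\varrho(s) \ud s \le \mathcal{F}(\varrho_{0}).
\end{equation*}
For weak solutions built by minimizing movements this holds as an inequality. By Proposition~\ref{proposition-8-trapping} the supports stay in a fixed interval $[-L, L]$. Together with the uniform $L^\infty$ bound of Proposition~\ref{proposition-4-existence}, this makes $\mathcal{F}(\varrho(t))$ bounded below uniformly in $t$. Hence the dissipation $D(s) := \|D_W\mathcal{F}(\varrho(s))\|^2_{L^2(\varrho(s))}$ is integrable on $(0,\infty)$.

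\emph{Step 2: compactness and identification of limits.} Fix $T>0$ and consider the time-shifted curves $\varrho_n(s) := \varrho(t_n + s)$ on $s\in[0,T]$, for any sequence $t_n \to \infty$. Their supports lie in $[-L,L]$ and they are uniformly bounded in $L^\infty$. They are also equicontinuous in $W_2$, since
\begin{equation*}
    W_2(\varrho(t),\varrho(s)) \le |t-s|^{1/2}\Big(\int_s^t D\Big)^{1/2}.
\end{equation*}
By Arzel\`a--Ascoli a subsequence converges in $C([0,T];W_2)$ to a limit curve $\bar\varrho(s)$, and moreover $\int_0^T D(t_n+s)\ud s \to 0$.

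We need compactness strong enough to pass to the limit in the dissipation. The interaction part $V = W'\ast\varrho$ converges uniformly on $[-L,L]$ because $W$ is smooth. For the diffusion part we use the Aronson--B\'enilan bound of Proposition~\ref{proposition-5-AB}: for $t\ge1$ it gives $p_{xx}\ge -1-C$, so $p(\cdot,t)$ is semiconvex and uniformly bounded on $[-L,L]$. Semiconvexity gives local Lipschitz bounds and hence compactness of $p$ in $L^1$, which upgrades the weak convergence $\varrho_n\to\bar\varrho$ to strong convergence. By lower semicontinuity of the Fisher-type functional $\int |\partial_x p + V|^2 \ud\varrho$, written as $\int |\partial_x \varrho^m \cdot \tfrac{1}{\varrho}|^2\dots$ in the standard convex form $\int |\partial_x\varrho^m + \varrho V|^2/\varrho$, we obtain $D_W\mathcal{F}(\bar\varrho(s))=0$ for a.e.\ $s$.

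\emph{Step 3: conclusion.} Each such $\bar\varrho(s)$ is a steady state with mass center $0$, because the mass center is conserved and passes to the limit under $W_2$ convergence with uniformly bounded supports. By Proposition~\ref{proposition-3-steady} it therefore equals $\varrho_\ast$. Since every sequence $t_n\to\infty$ has a subsequence with $\varrho(t_n)\to\varrho_\ast$ in $W_2$, the whole family converges. On $[-L,L]$, $W_2$ convergence is equivalent to weak convergence, so no moment issues arise.

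The main obstacle is Step 2. Characterizing the limit as a critical point requires the lower semicontinuity of the dissipation, and this needs strong compactness of $\varrho^m$ (or of the pressure). The plan is to derive that compactness from the Aronson--B\'enilan semiconvexity together with uniform trapping. One must also check that any steady state here, which need not be a minimizer a priori, has connected support so that the uniqueness statement applies. Uniqueness of steady states in \cite{delgadino2022uniqueness} is stated for all critical points, which resolves this. Alternatively, since $\Omega(t)$ is an interval for every $t$, the limiting support is also an interval.
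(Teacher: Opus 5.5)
Your proposal is correct and follows the same LaSalle-type route as the paper: the energy serves as a Lyapunov functional, uniform trapping (Proposition~\ref{proposition-8-trapping}) gives precompactness in $W_2$, and uniqueness of the steady state centered at $0$ identifies the limit. Your Step 2 in fact supplies the step the paper only asserts, namely that every subsequential limit is a steady state, via integrability of the dissipation, time shifts, Aronson--B\'enilan compactness and lower semicontinuity of $\int |\partial_x\varrho^m+\varrho V|^2/\varrho$. However, drop the closing aside: interval supports of $\varrho(t)$ need not pass to a uniform limit, since interior vacuum can appear there, so the identification should rest, as you primarily argue, on uniqueness among all stationary solutions (radial symmetry from \cite{carrillo2016nonlinear} together with \cite{delgadino2022uniqueness}).
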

\begin{proof}
Since $\varrho(t)$ is the gradient flow of $\mathcal{F}(\varrho)$,
\begin{equation*}
    \frac{\mathrm{d}}{\mathrm{d}t} \mathcal{F}(\varrho(t)) = -\|D_{W}\mathcal{F}(\varrho(t))\|_{L^{2}(\mathbb{R}^d,\, \varrho(t))}^2 \le 0.
\end{equation*}
Hence $\mathcal{F}(\varrho)$ is the Lyponouv functional, i.e., $\mathcal{F}(\varrho(t)) \le \mathcal{F}(\varrho_{0})$ for all $t \ge 0$. By the uniform trapping in Proposition~\ref{proposition-4-long-time}, the time sequence $(\varrho(t))_{t \ge 0}$ is a bounded sequence in $\mathscr{P}_{\infty}(\mathbb{R}^d)$. By the compact embedding of $\mathscr{P}_{\infty}(\mathbb{R}^d)$ into $\mathscr{P}_{2}(\mathbb{R}^d)$ (see for example \cite[Chapter 6]{villani2008optimal}), $(\varrho(t))_{t \ge 0}$ is pre-compact in $\mathscr{P}_{2}(\mathbb{R}^d)$.

According to the uniqueness of steady state $\varrho_\ast$, for any subsequence $\varrho(t_{\ell})$, there further exists a (sub)subsequence $\varrho(t_{\ell(\nu)})$ converging to $\varrho_{\ast}$ as $\nu \to \infty$. Hence,
\begin{equation*}
    \varrho(t) \to \varrho_{\ast} \text{ in } W_{2},
\end{equation*}
In addition, $\|D_{W}\mathcal{F}(\varrho(t))\|_{L^{2}(\mathbb{R},\, \varrho(t))} \to 0$ as $t \to \infty$; otherwise, $\varrho_{\ast}$ will not be a critical point.
\end{proof}

We then improve the $W_{2}$ convergence to $L^{\infty}$ convergence using an interpolation between $\mathscr{P}_{2}(\mathbb{R})$ and $C^{\alpha}(\mathbb{R})$.
\begin{lemma}\label{lemma-3-interpolation}Given compactly supported continuous solution $\varrho(t)$ converging to $\varrho_{\ast}$, then we have
\begin{equation*}
    \|\varrho(t) - \varrho_{\ast}\|_{L^{\infty}} \lesssim W_{2}(\varrho(t), \varrho_{\ast})^{\frac{1}{2m - 1}}
\end{equation*}
where $\varrho_{\ast}$ is the steady state.
\end{lemma}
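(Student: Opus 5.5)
We interpolate between a Hölder bound and a weak-norm bound, passing through the $W_1$ distance. In dimension one the difference $f := \varrho(t) - \varrho_{\ast}$ has primitive $F(x) := \int_{-\infty}^{x} f\,\mathrm{d}y$. This primitive is the difference of the two cumulative distribution functions, and $\|F\|_{L^{1}} = W_{1}(\varrho(t), \varrho_{\ast})$. Both measures are supported in a fixed bounded interval, by Proposition~\ref{proposition-8-trapping} and Proposition~\ref{proposition-3-steady}. Hence, by Cauchy--Schwarz (or Hölder on the transport plan), $W_{1} \le W_{2}$.

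The plan has three steps.

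\textbf{Step 1 (Hölder control).} We establish a uniform-in-time Hölder bound $\|f\|_{C^{\alpha}} \lesssim 1$ for $t \ge t_0$, with $\alpha = \frac{1}{m-1}$.
\begin{itemize}
\item For $\varrho_{\ast}$ this follows from Proposition~\ref{proposition-3-steady}: since $p_\ast = \frac{m}{m-1}\varrho_\ast^{m-1}$ is Lipschitz, $\varrho_{\ast}$ is $C^{\frac{1}{m-1}}$.
\item For $\varrho(t)$, we use that the pressure $p(t)$ is uniformly Lipschitz for $t \ge t_{0}$. This should follow from the regularity theory cited (Proposition~\ref{theorem:6-regularity} and \cite{zhang2022}), together with the uniform $L^\infty$ bound of Proposition~\ref{proposition-4-existence} and the Aronson--Bénilan semiconvexity of Proposition~\ref{proposition-5-AB}. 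A semiconvex, bounded function on a bounded interval has bounded one-sided derivatives in the interior. Near the free boundary, the slopes $p_x(a^+), p_x(b^-)$ are controlled via Dirac's law (Proposition~\ref{proposition-7-dirac}), with $V$ bounded.
\item Since $s \mapsto s^{1/(m-1)}$ is $\frac{1}{m-1}$-Hölder, $\varrho = (\frac{m-1}{m}p)^{1/(m-1)}$ is then uniformly $C^{\alpha}$.
\end{itemize}

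\textbf{Step 2 (interpolation).} Let $M := |f(x_0)| = \|f\|_{L^\infty}$. By Step 1, $|f| \ge M/2$ and $f$ has constant sign on the interval $I$ of radius $r \sim M^{1/\alpha}$ centred at $x_0$. Two cases arise.
\begin{itemize}
\item Suppose $|F| \ge M r/8$ at one endpoint of $I$. Then, by continuity of $F$ and $|F'| \le 2\|f\|_\infty$, we get $|F| \gtrsim M r$ on a neighbourhood of length $\sim r$.
\item Otherwise $|F(x_0 \pm r)| < Mr/8$, while $|F(x_0+r) - F(x_0-r)| = |\int_I f| \ge M r$, a contradiction. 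So only the first case occurs.
\end{itemize}
Either way, $\|F\|_{L^1} \gtrsim M r^{2} \sim M^{1 + 2/\alpha} = M^{2m-1}$. Combining with $W_{1} \le W_{2}$ gives
\begin{equation*}
\|\varrho(t) - \varrho_{\ast}\|_{L^{\infty}} \lesssim W_{1}(\varrho(t),\varrho_\ast)^{\frac{1}{2m-1}} \lesssim W_{2}(\varrho(t),\varrho_\ast)^{\frac{1}{2m-1}},
\end{equation*}
which is exactly the exponent claimed.

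\textbf{Step 3 (main obstacle).} The hard part is Step 1: a uniform-in-time Lipschitz bound on the pressure up to the free boundary. I would first get an interior bound from semiconvexity plus boundedness. I would then argue that the boundary slopes $|p_x(a^+,t)|$ and $|p_x(b^-,t)|$ are uniformly bounded for $t \ge t_0$. To do this, I would invoke Proposition~\ref{theorem:6-regularity} ($C^{1,\alpha}$ regularity of the pressure near the free boundary), with constants uniform because the support stays trapped (Proposition~\ref{proposition-8-trapping}) and the drift $V = W' \ast \varrho$ is uniformly smooth on the trapped region. The convergence hypothesis is used only to fix $t_0$ and make the constants uniform.
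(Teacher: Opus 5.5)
Your Step 2 is the paper's interpolation argument written in primal form. The paper uses the Kantorovich--Rubinstein dual with the tent function $\varphi_\ast(x) = (r - |x - x_0|)_+$ and gets $W_1 \ge \tfrac12 M r^2$ in one line. Since $\int \varphi_\ast \, \mathrm{d}(\varrho(t) - \varrho_\ast) = -\int \varphi_\ast' F \, \mathrm{d}x \le \|F\|_{L^1}$, your CDF case analysis proves the same bound, only less directly and only in one dimension. The exponent count $M^{1+2/\alpha} = M^{2m-1}$ and the use of $W_1 \le W_2$ match the paper.

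The problem is in Steps 1 and 3: how you get the uniform Lipschitz bound on $p(t)$ up to the free boundary.
\begin{itemize}
\item Invoking Proposition~\ref{theorem:6-regularity} is circular. Its proof starts from Lemma~\ref{proposition-12-up}, which the paper derives from the $L^\infty$ convergence of the very lemma you are proving. Its boundary part also already assumes an upper bound $p_x \le c_1$.
\item Dirac's law does not close the gap. It only gives $p_x(a^+) = -D^+a - V(a)$, and you have no bound on $D^+a$.
\item Semiconvexity plus boundedness, as you use it, only controls $p_x$ at positive distance from $\partial\Omega(t)$.
\end{itemize}

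The missing observation is that semiconvexity combined with the zero boundary values already gives a global bound. For $t \ge t_0$, Proposition~\ref{proposition-5-AB} gives $p_{xx} \ge -K$ with $K = 1/t_0 + C$. Apply the semiconvex tangent-line inequality at $x \in (a,b)$ and evaluate it at $y = a$ and $y = b$, where $p = 0$:
\begin{equation*}
-\tfrac{K}{2}(x-a) \le \frac{p(x)}{x-a} - \tfrac{K}{2}(x-a) \le p_x(x) \le -\frac{p(x)}{b-x} + \tfrac{K}{2}(b-x) \le \tfrac{K}{2}(b-x).
\end{equation*}
Hence $|p_x| \le \tfrac{K}{2}\big(b(t) - a(t)\big)$, which is uniform in $t$ by Proposition~\ref{proposition-8-trapping}. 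Alternatively, cite the uniform Lipschitz bound of \cite{zhang2022} directly; that is all the paper does, since it simply asserts the Lipschitz continuity of $p(t)$. With either replacement, Step 1 no longer depends on later results and the rest of your argument goes through.
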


The proof of this lemma is shown in Appendix~\ref{ap:a}. Building on the $L^{\infty}$ convergence, we are able to get the following uniform interior positivity.
\begin{lemma}\label{proposition-12-up}For any compact subset $K$ of the interior of $\Omega_{\ast} \cap \Omega(t)$, there exists a time $t_{0}(K)$ such that, we have
\begin{equation*}
    \varrho(x, t) > c_{K},
\end{equation*}
for all $x \in K$ and $t \ge t_{0}(K)$, with a positive constant $c_{K}$
\end{lemma}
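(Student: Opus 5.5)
The plan is to combine the $L^{\infty}$ convergence of Lemma~\ref{lemma-3-interpolation} with the strict positivity of the steady state in the interior of its support (Proposition~\ref{proposition-3-steady}). Fix a compact set $K$ contained in the interior of $\Omega_{\ast}$. Since $\varrho_{\ast}$ is continuous and strictly positive in the interior of $\Omega_{\ast}$, it attains a positive minimum on $K$:
\begin{equation*}
    m_{K} := \min_{x \in K} \varrho_{\ast}(x) > 0 .
\end{equation*}
The candidate constant is $c_{K} := m_{K}/2$.

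Next I use Lemma~\ref{prop-19-qual}, which gives $W_{2}(\varrho(t), \varrho_{\ast}) \to 0$. Together with Lemma~\ref{lemma-3-interpolation} this yields
\begin{equation*}
    \|\varrho(t) - \varrho_{\ast}\|_{L^{\infty}} \lesssim W_{2}(\varrho(t), \varrho_{\ast})^{\frac{1}{2m - 1}} \to 0 .
\end{equation*}
Hence there is a time $t_{0}(K)$ with $\|\varrho(t) - \varrho_{\ast}\|_{L^{\infty}} \le m_{K}/2$ for all $t \ge t_{0}(K)$. For such $t$ and every $x \in K$,
\begin{equation*}
    \varrho(x,t) \ge \varrho_{\ast}(x) - \tfrac{m_{K}}{2} \ge \tfrac{m_{K}}{2} = c_{K} .
\end{equation*}
This also shows that $K$ lies inside $\Omega(t)$ for large $t$. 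So the hypothesis "$K \subset \operatorname{int}(\Omega_{\ast} \cap \Omega(t))$" only needs to be imposed through $\Omega_{\ast}$, and the $t$-dependent part of the hypothesis is harmless.

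The main point needing care is that the implicit constant in Lemma~\ref{lemma-3-interpolation} is uniform in $t$. That interpolation presumably uses a uniform H\"older bound $\|\varrho(t)\|_{C^{\alpha}}$ (from the $L^{\infty}$ bound of Proposition~\ref{proposition-4-existence} together with the pressure regularity) and the uniform support bound of Proposition~\ref{proposition-8-trapping}. I would check that these bounds hold for $t \ge 1$, say, so that the implied constant does not depend on $t$. If needed, I would start the argument at a positive time at which these bounds are already available.

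Pointwise evaluation is legitimate because both densities are continuous: $\varrho(t)$ by the regularity theory and $\varrho_{\ast}$ by Proposition~\ref{proposition-3-steady}. Consequently the $L^{\infty}$ bound holds everywhere, not merely almost everywhere.
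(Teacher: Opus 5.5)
Your proposal is correct and follows the paper's argument: the paper also obtains the lemma directly by combining the $L^{\infty}$ convergence of Lemma~\ref{lemma-3-interpolation} (fed by the $W_2$ convergence of Lemma~\ref{prop-19-qual}) with the strict positivity of $\varrho_{\ast}$ on compact subsets of the interior of $\Omega_{\ast}$ from Proposition~\ref{proposition-3-steady}. Your concern about the time-uniformity of the interpolation constant matches how the paper proves that lemma, where the constant comes from a time-uniform Lipschitz bound on the pressure.
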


This uniform interior positivity is the direct result by combining the $L^{\infty}$ convergence in Lemma~\ref{lemma-3-interpolation} and the interior positivity of the steady state $\varrho_{\ast}$ in Proposition~\ref{proposition-3-steady}.

\section{Quantitative convergence in 1 dimension}
In this part, we prove the quantitative convergence based on the principal of linearization. 

\subsection{Regularity needed for linearization}
We establish the uniform upper bound of the $C^{1, \alpha}$ norm of $p(t)$, which is an important regularity estimate for conducting the linearization. Proofs of Propositions~\ref{theorem:6-regularity} and~\ref{theorem-4-optimal} are postponed to Section~\ref{sec:5-regularity}.
\begin{proposition}\label{theorem:6-regularity}Let $\Omega(t) = [a(t), b(t)]$ be the support of the density $\varrho(t)$, and $\Omega_{\ast} := [-R, R]$ be the support of the steady states $\varrho_{\ast}$. We define $A$ to be the affine interpolation given by
\begin{equation*}
    A: \Omega_{\ast} \to \Omega(t),\quad x \mapsto -R - (R + x) \frac{b(t) - a(t)}{2R}.
\end{equation*}
Let $p_{A}(x, t) := p(A(x, t), t)$, after any short time $t_{0}$, we have
\begin{equation*}
    \|p_{A}(t)\|_{C^{1, \beta}(\Omega_{\ast})} \le C(t_{0})
\end{equation*}
for a constant $C(t_{0})$ and $0 < \beta \le 1$, and for all $t \ge t_{0}$.
\end{proposition}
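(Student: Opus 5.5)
We prove a uniform $C^{1,\beta}$ bound for the pressure up to the free boundary, for all $t \ge t_0$. Since $A$ is affine with slope $(b(t)-a(t))/2R$, it suffices to bound $\|p(t)\|_{C^{1,\beta}([a(t),b(t)])}$ uniformly and to keep $b(t)-a(t)$ bounded above and below. Proposition~\ref{proposition-8-trapping} gives the upper bound on the length. For the lower bound we use mass one and the $L^\infty$ bound of Proposition~\ref{proposition-4-existence}: together they force $b(t)-a(t) \ge 1/\|\varrho\|_{L^\infty} > 0$. The plan then has three steps: uniform Lipschitz bounds on $p$, the Hölder continuity of $p_x$ in the interior, and the Hölder continuity of $p_x$ up to the free boundary.

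\textbf{Step 1 (Lipschitz bound).} The pressure is bounded because $\varrho$ is uniformly bounded. The drift $V = W'\ast\varrho$ is smooth, and on the uniformly bounded support its $C^k$ norms are uniformly bounded in time, since $W$ is smooth and $\Omega(t)$ stays in a fixed compact set. In one dimension we combine the Aronson--Bénilan bound of Proposition~\ref{proposition-5-AB} with the boundedness of $p$ on an interval of bounded length:
\begin{itemize}
\item for $t \ge t_0$, the function $p + (\tfrac{1}{t_0} + C)\tfrac{x^2}{2}$ is convex;
\item a bounded, semiconvex function on a bounded interval has $p_x$ bounded from above and below in the interior;
\item the bound degenerates only at the endpoints, and there the one-sided limits of Proposition~\ref{proposition-7-dirac} exist.
\end{itemize}
To control the endpoint slopes, we bound $|D^+a(t)|$ and $|D^+b(t)|$ using Dirac's law. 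If that fails, we use a barrier argument: compare $p$ near $a(t)$ with traveling-wave supersolutions of \eqref{2-6-pressure} of the form $K(x-a(t)+Lt)_+$, which is legitimate because $V$ and $V_x$ are bounded. This gives a uniform bound on $\|p_x(t)\|_{L^\infty}$ for $t \ge t_0$.

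\textbf{Step 2 (interior regularity).} On compact subsets of the interior, Lemma~\ref{proposition-12-up} (positivity away from the boundary for large $t$), or the positivity of $p$ for moderate $t$, makes \eqref{2-6-pressure} uniformly parabolic there. Its coefficients are then bounded by Step 1. Standard quasilinear parabolic theory (De Giorgi--Nash followed by Schauder) then gives $C^{1,\beta}$ and even smooth interior bounds.

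\textbf{Step 3 (boundary regularity, the main obstacle).} Near $a(t)$ we change to a flattening coordinate by setting $y = x - a(t)$. We also use the non-degeneracy of Proposition~\ref{proposition-4-long-time}, namely $p_x(a^+) \ge c$, which gives $p \sim y$ near the boundary; we combine it with the uniform Lipschitz bound from Step 1. The equation then becomes
\begin{equation*}
\dot p = (m-1) p\, p_{yy} + \text{lower order},
\end{equation*}
a degenerate equation whose degeneracy is comparable to the distance to the boundary. This is exactly the setting of the Daskalopoulos--Hamilton and Koch type $C^{1,\beta}$ theory for the porous medium pressure, which is also the approach used in \cite{kim2021, zhang2022}.

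My first attempt would be a hodograph (Lagrangian) transform in one dimension. Let $x = X(h, t)$ be the inverse of the cumulative mass function. The free boundary becomes the fixed points $h = 0$ and $h = 1$, and the equation becomes
\begin{equation*}
\partial_t X = -\partial_h\big(\varrho^m\big) - V(X),
\end{equation*}
with $\varrho = 1/X_h$. In these variables the non-degeneracy condition produces a uniformly parabolic equation for a suitable power of $X_h$, with weight $h^{\sigma}$, to which weighted Schauder estimates apply. The rescaling by $A$ is then harmless, because its slope is bounded above and below. The delicate point is to make all constants independent of $t$. For this we use the time-uniform ingredients above: trapping, non-degeneracy, and bounded $C^k$ norms of $V$. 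We then apply the local estimates on unit time windows $[t-1, t]$ for every $t \ge t_0 + 1$, and cover $[t_0, t_0 + 1]$ by the short-time theory.
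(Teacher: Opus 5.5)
Your affine reduction and Steps 1 and 2 are fine in substance. In Step 1 the route through Dirac's law is circular, because $D^+a$ is itself given by $p_x(a^+)$. It is also unnecessary: $p_x+(1/t_0+C)x$ is nondecreasing and $p_x(a^+)\ge 0\ge p_x(b^-)$, which already gives $|p_x|\le (1/t_0+C)(b-a)$.

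The genuine gap is Step 3, which is the whole content of the proposition. Near $a(t)$, the only time-uniform information you have is the Lipschitz bound, non-degeneracy, trapping and the bounds on $V$. In your Lagrangian variables this says only that $\sigma:=X_h^{-m}=\varrho^m$ is comparable to $h$. So the principal coefficient $m\sigma^{1+1/m}=m\,h^{1+1/m}(\sigma/h)^{1+1/m}$ is merely bounded above and below relative to the weight $h^{1+1/m}$. Incidentally, no power of $X_h$ gives a uniformly parabolic equation: every function of $X_h$ has principal coefficient $mX_h^{-(m+1)}\sim h^{1+1/m}$, so a change of the spatial variable is unavoidable.

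Schauder-type estimates, weighted or of Daskalopoulos--Hamilton type, require a H\"older modulus for this solution-dependent coefficient, i.e.\ for $\sigma/h$, which is an average of $p_x$ in mass coordinates. That is essentially what you are trying to prove. The same circularity breaks your window argument. An estimate on $[t-1,t]$ has time-uniform constants only if its input at time $t-1$ is the time-uniform data above. A Schauder estimate instead needs a H\"older norm at time $t-1$, which you do not control uniformly. In the interior this issue does not arise, because there the coefficient $(m-1)p$ is already Lipschitz by Step 1.

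What is missing is a De Giorgi--Nash--Moser/Harnack-type H\"older estimate for a degenerate equation with merely bounded measurable coefficients. The paper obtains it in four moves:
\begin{enumerate}
\item invert $p$ in the pressure level, $p(Z(z,t),t)=z$;
\item write an equation for $A=-1/p_x$ with principal part $z^{-\alpha}\partial_z(z^{\alpha+1}A^{-2}A_z)$, where $A^{-2}$ is only bounded;
\item substitute $s=2\sqrt z$, which gives a Bessel operator of effective dimension $2\alpha+2$;
\item lift radially to $\mathbb{R}^3$ with the $A_2$ weight $|x|^{2\alpha-1}$ and apply Chiarenza--Serapioni.
\end{enumerate}

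Your Lagrangian route can be repaired in the same spirit. The function $w(h,t):=p_x(X(h,t),t)=\partial_h\sigma$ solves the divergence-form equation
\[
\partial_t w=\partial_h\big(m\sigma^{1+1/m}\,w_h\big)+m\,\partial_h\big(\sigma\,V_x(X,t)\big),\qquad h\in(0,1).
\]
With $r=h^{(m-1)/(2m)}$, both the time weight and the diffusion weight become $r^{(m+1)/(m-1)}$. This weight is not $A_2$ on the line, but after a radial lift to $\mathbb{R}^3$ it becomes exactly the paper's weight $|y|^{(3-m)/(m-1)}$. Chiarenza--Serapioni then gives a H\"older modulus for $w$ in $h$, with constants depending only on the time-uniform bounds. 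Since $h=\int_{-\infty}^x\varrho$ is Lipschitz in $x$, this yields the H\"older modulus for $p_x$ in $x$.
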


Based on this pressure regularity, we show the following regularity of flow maps.
\begin{theorem}\label{theorem-4-optimal}Let $T_{\varrho_{\ast} \to \varrho(t)}$ be the optimal transport map from $\varrho_{\ast}$ to $\varrho(t)$, we have,
\begin{equation*}
    \partial_{x}T_{\varrho_{\ast} \to \varrho(t)} \ge c(t_{0}),\quad\|T_{\varrho_{\ast} \to \varrho(t)}\|_{C^{1, \beta}(\Omega_{\ast})} \le C(t_{0})
\end{equation*}
for positive constants $c(t_{0})$, $C(t_{0})$ and exponents $0 < \beta \le 1$, and all $t \ge t_{0}$ after a short time $t_{0}$.
\end{theorem}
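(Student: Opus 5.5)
In dimension one the optimal map is the monotone rearrangement. Let $F_\ast(x)=\int_{-R}^x \varrho_\ast$ and $F_t(y)=\int_{a(t)}^y\varrho(t)$. Then
\begin{equation*}
T:=T_{\varrho_\ast\to\varrho(t)}=F_t^{-1}\circ F_\ast \quad\text{on } \Omega_\ast,
\end{equation*}
so that $T(-R)=a(t)$, $T(R)=b(t)$, and, wherever $\varrho_\ast>0$,
\begin{equation*}
\partial_x T(x)=\frac{\varrho_\ast(x)}{\varrho(T(x),t)}.
\end{equation*}
The plan is to control this ratio uniformly on $\Omega_\ast$, including near the endpoints, and then to differentiate once more to get a $C^{0,\beta}$ bound on $\partial_x T$.

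\textbf{Step 1 (reduction to pressures).} Since $\varrho=((m-1)p/m)^{1/(m-1)}$,
\begin{equation*}
\partial_xT=\Big(\frac{p_\ast(x)}{p(T(x),t)}\Big)^{\frac1{m-1}}.
\end{equation*}
In the interior I use Lemma~\ref{proposition-12-up} together with the $L^\infty$ bound from Proposition~\ref{proposition-4-existence}. Near the endpoints both pressures vanish linearly:
\begin{itemize}
\item $p_\ast(x)\sim R-|x|$ by Proposition~\ref{proposition-3-steady}, since $\varrho_\ast\sim\delta^{1/(m-1)}$;
\item $p(y,t)\sim \delta(y,\mathbb{R}-\Omega(t))$ uniformly for $t\ge t_0$, from Proposition~\ref{proposition-4-long-time} (slopes bounded below) and Proposition~\ref{theorem:6-regularity} (slopes bounded above, $C^{1,\beta}$ control).
\end{itemize}
Also $b(t)-a(t)$ is bounded above by Proposition~\ref{proposition-8-trapping} and bounded below by mass conservation plus the $L^\infty$ bound. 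So the real task is to show that $T$ maps distance to the boundary comparably: $\delta(T(x),\partial\Omega(t))\sim R-|x|$.

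\textbf{Step 2 (boundary comparability).} Near $-R$, integrating the linear pressure behaviour gives
\begin{equation*}
F_\ast(x)\sim (x+R)^{m/(m-1)},\qquad F_t(y)\sim (y-a(t))^{m/(m-1)},
\end{equation*}
uniformly in $t\ge t_0$. Since $F_t(T(x))=F_\ast(x)$, it follows that $T(x)-a(t)\sim x+R$. Plugging this into Step 1 gives $c\le\partial_xT\le C$ on all of $\Omega_\ast$. The step I expect to be the main obstacle is making these constants uniform in $t$. This requires the two-sided linear bound on $p$ on a collar of fixed width near $a(t)$, not just at the single point $a(t)$. I would obtain it from the uniform $C^{1,\beta}$ bound on $p_A$: the slope bound $p_x\ge c$ at $a(t)^+$ propagates to $p_x\ge c/2$ on $[a(t),a(t)+\eta]$ with $\eta$ independent of $t$. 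On the complementary region the uniform interior positivity of Lemma~\ref{proposition-12-up} applies. Choosing $t_0$ large enough makes the two regions overlap.

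\textbf{Step 3 (Hölder bound on $\partial_xT$).} Write
\begin{equation*}
\partial_xT=G\big(p_\ast(x)/(R-|x|)\big)\cdot G\big(\delta(T(x))/p(T(x),t)\big)\cdot G\big((R-|x|)/\delta(T(x))\big),\qquad G(s)=s^{1/(m-1)}.
\end{equation*}
\begin{itemize}
\item The first factor is $C^{0,\beta}$ up to the boundary with a positive lower bound, since $p_\ast$ is $C^{1,\beta}$ and vanishes linearly.
\item The second factor has the same properties uniformly in $t$. This uses the standard estimate that if $q\in C^{1,\beta}$, $q(0)=0$ and $q'\ge c$, then $q(s)/s$ is $C^{0,\beta}$ with norm controlled by $\|q\|_{C^{1,\beta}}$. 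Precomposing with the bi-Lipschitz map $T$ preserves the $C^{0,\beta}$ class.
\item The third factor, near $-R$, equals $(x+R)/(T(x)-a(t))$, the reciprocal of an average of $\partial_x T$. It is bounded above and below by Step 2. I would close its Hölder regularity by a bootstrap: the Lipschitz bound from Step 2 makes this quotient Hölder, which then feeds back into the formula.
\end{itemize}
The same argument applies near $R$. Since $G$ is smooth on $[c,C]$, this yields $\|T\|_{C^{1,\beta}(\Omega_\ast)}\le C(t_0)$, possibly with a smaller $\beta$, together with $\partial_xT\ge c(t_0)$.
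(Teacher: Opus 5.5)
Your strategy matches the paper's. You use the monotone rearrangement with $\partial_xT=(p_\ast/p(T,t))^{1/(m-1)}$ and linear vanishing of both pressures on a collar of uniform width. Distribution functions comparable to $\delta^{1+\alpha}$, with $\alpha=1/(m-1)$, then give $T(x)-a(t)\sim x+R$ and $c\le\partial_xT\le C$. The paper's affine rescaling by $A$ is only cosmetic. Steps 1--2 and your first two factors in Step 3 are fine.

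\textbf{The gap is the third factor.} You claim the Lipschitz bound on $T$ makes $\phi(x):=(T(x)-a(t))/(x+R)$ H\"older up to $x=-R$. This is false. An average $\frac1s\int_0^s f$ of a bounded $f$ need not even be continuous at $s=0$. For example, with $s=x+R$ and $\partial_xT=1+\tfrac12\cos\log s$, $T$ is bi-Lipschitz, yet $\phi=1+\tfrac14(\cos\log s+\sin\log s)$ has no limit as $s\to0$. Lipschitz control only yields $|\phi'|\lesssim 1/s$, which degenerates exactly where $C^{1,\beta}$ regularity up to the boundary is at stake. So your bootstrap never gets started.

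\textbf{The missing idea} is to carry the H\"older structure, not only two-sided bounds, through the distribution functions. Write $\varrho_\ast=(x+R)^\alpha u_\ast$ and $\varrho(y,t)=(y-a(t))^\alpha u_t(y)$. Here $u_\ast,u_t$ are H\"older and bounded below; this is what your estimate on $p/\delta$ already gives. Then $F_\ast=(x+R)^{1+\alpha}Q_\ast$ and $F_t=(y-a(t))^{1+\alpha}Q_t$, where
\[
Q_\ast(x)=\int_0^1\tau^\alpha u_\ast\bigl(-R+\tau(x+R)\bigr)\,\mathrm{d}\tau,
\]
and similarly for $Q_t$. These are $\beta$-H\"older with seminorm at most $[u]_\beta$, and bounded below. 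The identity $F_t\circ T=F_\ast$ then gives
\[
\phi(x)^{1+\alpha}=\frac{Q_\ast(x)}{Q_t(T(x))}.
\]
The right side is H\"older because $T$ is Lipschitz, so no bootstrap is needed. This is exactly the paper's step.

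Equivalently, you can stay inside your factorization. Let $H$ be the product of your first two factors and set $\psi:=\phi^{1+\alpha}$. Then $(x+R)\psi'=(1+\alpha)(H-\psi)$, and boundedness of $\psi$ forces
\[
\psi(x)=(1+\alpha)\int_0^1\tau^\alpha H\bigl(-R+\tau(x+R)\bigr)\,\mathrm{d}\tau.
\]
It is this averaging identity, not the Lipschitz bound, that transfers H\"older regularity to $\phi$.
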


The qualitative convergence $\varrho(t) \to \varrho_{\ast}$ in $W_{2}$ implies that $T_{\varrho_{\ast} \to \varrho(t)} \to \mathrm{id}$ in $L^{2}$. An interpolation between this $L_{2}$ convergence and the uniform $C^{1, \beta}$ bound of $T_{\varrho_{\ast} \to \varrho(t)}$ leads to the following.
\begin{proposition}
Let $T_{\varrho_{\ast} \to \varrho(t)}$ be the optimal transport map to $\varrho(t)$, we have
\begin{equation*}
    T_{\varrho_{\ast} \to \varrho(t)} \to \mathrm{id} \text{ in } C^{1}(\Omega_{\ast}),
\end{equation*}
where $T_{\varrho_{\ast} \to \varrho(t)}$ is the optimal transport map from $\varrho_{\ast}$ to $\varrho(t)$.
\end{proposition}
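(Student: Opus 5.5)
The proof is an interpolation: the $W_2$ convergence gives $T_t := T_{\varrho_{\ast} \to \varrho(t)} \to \mathrm{id}$ in $L^2$, and the uniform $C^{1,\beta}$ bound lets us upgrade this to $C^1$.

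\textbf{Step 1 (from $W_2$ to $L^2(\Omega_\ast)$).} In dimension one the optimal map is the monotone rearrangement, so
\begin{equation*}
    W_{2}(\varrho(t), \varrho_{\ast})^{2} = \int_{\Omega_\ast} |T_t(x) - x|^{2} \ud \varrho_{\ast}(x).
\end{equation*}
By Lemma~\ref{prop-19-qual} this tends to $0$. Since $\varrho_\ast$ vanishes at $\pm R$, this is only a weighted $L^2$ bound. On $[-R+\eta, R-\eta]$, however, $\varrho_\ast \ge c_\eta > 0$ by Proposition~\ref{proposition-3-steady}, so $\|T_t - \mathrm{id}\|_{L^2([-R+\eta, R-\eta])} \to 0$ for each $\eta > 0$.

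To pass to all of $\Omega_\ast$, use Theorem~\ref{theorem-4-optimal}: $T_t - \mathrm{id}$ is uniformly Lipschitz on $\Omega_\ast$ for $t \ge t_0$. Its values on the strip of width $\eta$ near $\pm R$ are therefore controlled by nearby interior values plus $C\eta$. Letting $t \to \infty$ and then $\eta \to 0$ gives $\|T_t - \mathrm{id}\|_{L^\infty(\Omega_\ast)} \to 0$.

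\textbf{Step 2 (compactness in $C^1$).} For $t \ge t_0$, Theorem~\ref{theorem-4-optimal} bounds $\{T_t\}$ uniformly in $C^{1,\beta}(\Omega_\ast)$. Since $\Omega_\ast = [-R,R]$ is compact, Arzel\`a--Ascoli shows this family is precompact in $C^1(\Omega_\ast)$. Take any sequence $t_k \to \infty$ and a subsequence converging in $C^1$ to some $S$. Step 1 forces $S = \mathrm{id}$, so the derivatives converge uniformly to $1$. Because the limit is the same for every subsequence, the whole family satisfies $T_t \to \mathrm{id}$ in $C^1(\Omega_\ast)$.

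\textbf{Quantitative alternative.} One can instead interpolate directly, using $\|f'\|_{L^\infty} \lesssim \|f\|_{L^\infty}^{\beta/(1+\beta)} \|f\|_{C^{1,\beta}}^{1/(1+\beta)}$ with $f = T_t - \mathrm{id}$. This yields a rate in terms of $W_2$, which may be useful later.

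\textbf{Main obstacle.} The delicate point is the degeneracy of the weight $\varrho_\ast$ at the free boundary $\pm R$: $W_2$ alone gives no control near the endpoints. The Lipschitz (indeed $C^{1,\beta}$) bound up to the boundary from Theorem~\ref{theorem-4-optimal} is exactly what closes this gap.

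A second point to check is that $T_t$ is defined on all of $\Omega_\ast$ and maps endpoints to endpoints, $T_t(-R) = a(t)$ and $T_t(R) = b(t)$. This follows from the monotone rearrangement and the positivity of $\varrho_\ast$ in the interior. Combined with Step 1, it gives $a(t) \to -R$ and $b(t) \to R$.
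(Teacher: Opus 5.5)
Your proposal is correct and follows the same route as the paper. The paper simply asserts that the $W_2$ convergence of Lemma~\ref{prop-19-qual} gives $T_{\varrho_\ast\to\varrho(t)}\to\mathrm{id}$ in $L^2$, and that interpolating against the uniform $C^{1,\beta}$ bound of Theorem~\ref{theorem-4-optimal} upgrades this to $C^1$; your Arzel\`a--Ascoli argument (or the explicit interpolation inequality) and your handling of the degenerate weight $\varrho_\ast$ near $\pm R$ supply the details the paper leaves implicit.
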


\subsection{Proof of quantitative convergence}\label{sec:3-2-regularity}
Based on this high-order convergence of pressure function and flow map, we proceed to show the quantitative convergence.
\begin{proof}[Proof of Theorem~\ref{theorem_1_main}]We parametrize solutions $\varrho(t)$ in the local neighborhood of $\varrho_{\ast}$ using transport map $X : \Omega_{\ast} \to \mathbb{R}$ as follows
\begin{equation*}
    \varrho(t) = (\mathrm{id} + X(t)) \# \varrho_{\ast}. 
\end{equation*}
where $\mathrm{id} + X(t) = T_{\varrho_{\ast} \to \varrho(t)}$ is the optimal transport map. Parameterizing the aggregation-diffusion in terms of this transport map, we are able to get
\begin{equation*}
    \begin{aligned}
        \frac{\partial X(t)}{\partial t} + D_{X}\mathcal{F}\big((\mathrm{id} + X(t))\# \varrho_{\ast}\big) = 0.
    \end{aligned}
\end{equation*}
$D_{X}$ is the first variation of the aggregation-diffusion energy with respect to the flow map. Building on the Jacobi formula of push-forward, one can explicitly calculate
\begin{equation*}
    \begin{aligned}
        D_{X}\mathcal{F}((\mathrm{id} + X)\# \varrho_{\ast}) &= \frac{1}{\varrho_{\ast}(x)} \frac{\partial}{\partial x} \bigg[\varrho_{\ast}(x)^{m} \bigg[\frac{1}{(1 + \partial_{x}X(x, t))^{m}} - 1\bigg]\bigg]\\
        &\quad+ \int_{\Omega_{\ast}} W^{\prime}\big(x - \underline{x} + X(x) - X(\underline{x})\big) - W^{\prime}(x - \underline{x}) \ud \varrho_{\ast}(\underline{x}).
    \end{aligned}
\end{equation*}

\begin{remark}
Such an idea of flow map approach have been a popular method for one-dimensional optimal transport problems \cite{blanchet2008convergence, matthes2014convergence, matthes2017convergent, jacobs2025}, due to the well-known and very handy increasing rearrangement of mass (see, for example, \cite[Chapter 1]{villani2008optimal}). One technical difficulty is that the diffusion is transformed into slow blow-ups near the flow map boundaries, which requires careful regularity analysis.
\end{remark}

We then linearize the gradient flow at $X = 0$ as follows,
\begin{equation}\label{4-1-grad}
    \frac{\partial X(t)}{\partial t} + D_{W}^2\mathcal{F}(\varrho_{\ast}) X(t) + \mathcal{N}(X(t)) = 0,
\end{equation}
where $D_{W}^{2}\mathcal{F}(\varrho_{\ast})$ is the Wasserstein Hessian at steady state $\varrho_{\ast}$ defined in Theorem~\ref{theorem-4-hessian}, and
\begin{equation*}
    \mathcal{N}(X) := D_{X}\mathcal{F}((\mathrm{id} + X)\# \varrho_{\ast}) - D_{W}^2\mathcal{F}(\varrho_{\ast}) X,
\end{equation*}
is the residual term. According to Theorem~\ref{theorem-4-hessian}, we have characterized spectrum of the Wasserstein Hessian as
\begin{equation*}
    (X, D_{W}^{2}\mathcal{F}(\varrho_{\ast}) X)_{\varrho_{\ast}} \ge \Lambda_{R} \|X\|_{L^{2}(\varrho_{\ast})}^2 \text{ for all } \int_{\Omega_{\ast}} X \ud \varrho_{\ast} = 0,
\end{equation*}
for a constant $\Lambda_{R}$. This spectral gap characterization is a transversal coercivity orthogonal to the mode of translation.

For the residual term, we will show in the next proof that
\begin{equation}\label{eq:4-2-residual}
    |(\mathcal{N}(X), X)_{\varrho_{\ast}}| \lesssim \|X\|_{W^{1, \infty}} \big((X, D_{W}^{2}\mathcal{F}(\varrho_{\ast}) X) + \|X\|_{L^{2}(\varrho)}^2\big).
\end{equation}
Now, leveraging the qualitative $C^{1}$ convergence of $X(t)$ to $0$, we have, for any $\varepsilon > 0$, there exists a time $t_{0}$ such that, for any $t \ge t_{0}$,
\begin{equation*}
    |(\mathcal{N}(X), X)_{\varrho_{\ast}}| \le \varepsilon \big((X, D_{W}^{2}\mathcal{F}(\varrho_{\ast}) X) + \|X\|_{L^{2}(\varrho)}^2\big).
\end{equation*}
Hence, multiplying both side of the linearized gradient flow~\eqref{4-1-grad} with $X(t)$ and integrating over the space $L^{2}(\Omega_{\ast}, \varrho_{\ast})$. Let $e(t) := \|X(t)\|_{L^{2}(\varrho_{\ast})}^2$ and sort the equitation, we have
\begin{equation*}
    \begin{aligned}
        \frac{1}{2}\frac{\mathrm{d}e(t)}{\mathrm{d}t} &\le \varepsilon \big((X, D_{W}^{2}\mathcal{F}(\varrho_{\ast}) X) + e(t)\big) - (X, D_{W}^{2}\mathcal{F}(\varrho_{\ast})X)_{\varrho_{\ast}}\\
        &\le \big(\Lambda_{R} - \varepsilon(1 + \Lambda_{R})\big) e(t).
    \end{aligned}
\end{equation*}
Finally, by using the Gr\"onwall inequality, 
\begin{equation*}
    \|X(t)\|_{L^{2}(\varrho_{\ast})} \le \|X(t)\|_{L^{2}(\varrho_{\ast})} e^{-\alpha (t - t_{0})} \lesssim e^{-\alpha t} \text{ for all } t \ge t_{0},
\end{equation*}
where $\alpha = \Lambda_{R} - \varepsilon(1 + \Lambda_{R})$. Hence, we have $W_{2}(\varrho(t), \varrho_{\ast}) = \|X(t)\|_{L^{2}(\varrho_{\ast})} \lesssim e^{-\alpha t}$.
\end{proof}

\begin{proof}[Proof of Equation~\eqref{eq:4-2-residual}]In the following, we check that the residual $\mathcal{N}$ can be made arbitrary small as long as $\partial_{x}X$ is small. We split the residual into two parts,
\begin{equation*}
    \mathcal{N} = \mathcal{N}_{D} + \mathcal{N}_{A}
\end{equation*}
namely residual from linearized diffusion $\mathcal{N}_{D}$, and residual from linearized aggregation $\mathcal{N}_{A}$.

For the diffusion part, one has
\begin{equation*}
    (\mathcal{N}_{D}(X), X)_{\varrho_{\ast}} = \int_{\Omega_{\ast}} -\partial_{x}X \times \big((1 + \partial_{x}X)^{-m} - 1 + m\partial_{x}X\big) \ud \varrho^{m}_{\ast}.
\end{equation*}
By using the Taylor expansion $(1 + s)^{-m} - 1 = - m s + c_m s^2 + O(s^3)$ for a constant $c_m$ and H\"older's inequality, we have
\begin{equation*}
    \big|(\mathcal{N}_{D}(X), X)_{\varrho_{\ast}}\big| = \bigg|\int_{\Omega_{\ast}} -\partial_{x}X \times \Big[c_m (\partial_{x}X)^{2} + O\big((\partial_{x}X)^{3}\big)\Big] \ud \varrho^{m}_{\ast}\bigg| \lesssim \|\partial_{x}X\|_{L^{\infty}} (X, D_{W}^{2}\mathcal{S}(\varrho_{\ast}) X)_{\varrho_{\ast}},
\end{equation*}
where $D_{W}^{2}\mathcal{S}$ is the Wasserstein Hessian of the internal energy, given by
\begin{equation*}
    (X, D_{W}^{2}\mathcal{S}(\varrho_{\ast}) X)_{\varrho_{\ast}} := m\int_{\Omega_{\ast}} (\partial_{x}X)^2 \ud \varrho_{\ast}^{m}.
\end{equation*}
For the aggregation part, one has
\begin{equation*}
    \begin{aligned}
        \mathcal{N}_{A}(X)(x) = \int_{\Omega_{\ast}} DW\big(x - \underline{x} + X(x) - X(\underline{x})\big) &- DW(x - \underline{x}) \ud \varrho_{\ast}(\underline{x})\\
        &- \int_{\Omega_{\ast}} D^2W(x - \underline{x}) \big(X(x) - X(\underline{x})\big) \ud \varrho_{\ast}(\underline{x})
    \end{aligned}
\end{equation*}
By using the Taylor expansion of the attraction kernel $W$, we are able to get (in the second equality, we have used the fact that $\int_{\Omega_{\ast}}X \ud \varrho_{\ast} = 0$, which holds true along the aggregation-diffusion flow):
\begin{equation*}
    |\mathcal{N}_{A}(X)(x)| \lesssim \int_{\Omega_{\ast}} |X(x) - X(\underline{x})|^2 \ud \varrho_{\ast} (\underline{x}) = |X(x)|^2 + \|X\|_{L^{2}(\varrho_{\ast})}^2.
\end{equation*}
Therefore, we have
\begin{equation*}
    |(\mathcal{N}_{A}(X), X)_{\varrho_{\ast}}| \le \int_{\Omega_{\ast}} |\mathcal{N}_{A}(X)| \times |X| \ud \varrho_{\ast} \lesssim \|X\|_{L^{\infty}} \times \|X\|_{L^{2}(\varrho_{\ast})}^2.
\end{equation*}
Summing the two residual parts $\mathcal{N}_{D}$ and $\mathcal{N}_{A}$ together, we can get the estimate shown in~\eqref{eq:4-2-residual}.
\end{proof}


\section{Proof of long-time properties}\label{sec:5-2-flow}
In this section, we establish the uniform trapping and non-degeneracy shown in Section~\ref{sec:2-3-long}. Here we denote $\dot{u}$ to be the time derivatives and $u_{x}$ the spatial derivative.

\begin{proof}[Proof of Proposition~\ref{proposition-8-trapping}]In this part, we prove the uniform trapping mechanism for different scenarios. The main idea is to estimate the inward attraction in the Dirac's law,
\begin{equation*}
    D^{+}a(t) = -p_{x}(a(t)^{+}, t) - V(a(t),t)
\end{equation*}
and show the inward attraction effects dominate the pressure effects and outward attraction over long distances. For simplicity, we work on controlling the left free boundary, and the right one can be controlled similarly by reflection. In particular, we define the mass accumulation function
\begin{equation*}
    F(x, t) := \int_{x}^{\infty} \varrho(\underline{x}, t) \ud \underline{x},
\end{equation*}
and the uniform trapping amounts to show that, there exists a point $b_{\ast}$ independent of time, such that $F(x, t) = 0$ for all $x \ge b_{\ast}$. This goal can be achieved by building a compactly supported barrier $F_{\ast}(x)$ such that $F(x, t) \le F_{\ast}(x)$ for all time.

We first show, that a large amount of mass should be confined in neighborhoods of mass center when $m > 2$ and $W(x) \to \infty$ as $x \to \infty$. In particular, since $\varrho(t)$ is the gradient flow, the energy $\mathcal{F}(\varrho)$ acts as the Lyapunov functional, and we have
\begin{equation*}
    \frac{1}{2}\int_{\mathbb{R}\times \mathbb{R}}\, W(x - \underline{x}) \varrho(x, t) \varrho(\underline{x}, t) \ud x\ud \underline{x} \le \mathcal{F}(\varrho(t)) \le \mathcal{F}(\varrho_0) =: E_{0}
\end{equation*}
We let $m(t)$ be the mass median\footnote{This is not the center of mass $\int_{\mathbb{R}} x \ud \varrho$ which is fixed at $0$.} of $\varrho(t)$ defined such that $\int_{-\infty}^{m(t)} \varrho(x, t) \ud x = \frac{1}{2}$, we have, according to the monotonicity of $W$:
\begin{equation*}
    \begin{aligned}
        \int_{m(t)}^{\infty} \int_{-\infty}^{m(t)}\, W(x &- \underline{x}) \varrho(x, t) \varrho(\underline{x}, t) \ud x\ud \underline{x}\\
        &\ge \frac{1}{2} \max\bigg\{\int_{m(t)}^{\infty} \, W(m(t) - \underline{x}) \varrho(\underline{x}, t) \ud \underline{x}, \int_{-\infty}^{m(t)}\, W(x - m(t)) \varrho(x, t) \ud x\bigg\}
    \end{aligned}
\end{equation*}
Hence, we have
\begin{equation}\label{2-6-center}
    \int_{\mathbb{R}} \, W\big(|x - m(t)|\big) \ud \varrho(t) \le 8 E_{0}.
\end{equation}
By further using the monotonicity of $W$, we are able to get
\begin{equation}\label{2-7-mass}
    \bigg[\int_{-\infty}^{m(t) - R} + \int_{m(t) + R}^{+\infty}\bigg] \varrho(x, t) \ud x \le \frac{8E_0}{W(R)}.
\end{equation}
As $W'(x) \to \infty$ when $x \to \infty$, there exists $R_{\ast}$ (independent of time) such that the fixed interval $[m(t) - R_\ast, m(t) + R_\ast]$ carries mass $\frac{1}{2}$, and we call the density located in this interval the \textit{stellar} of the mass.

\begin{remark}
The condition $W'(x) \to \infty$ when $x \to \infty$ rules out the dichotomy case of the concentration compactness theorem. As long as $m(t)$ is uniformly bounded, we can rule out the vanishing case, and establish the concentration compactness.
\end{remark}

We next discuss several cases for proving the uniform trapping.\\
(1) (symmetric) We first consider the case where $\varrho(t)$ is symmetric, i.e., $\varrho(x, t) = \varrho(-x, t)$. Due to the symmetry of the aggregation kernel, this symmetry is preserved along the flow as long as the initial value is symmetric. Here we work the with condition $W' \in C^{\infty}_{b}(\mathbb{R})$ and $W'(x)W(x) \to \infty$ as $x \to \infty$.

In particular, the symmetry ensures $m(t) = 0$. Let $R_\ast$ be the radius such that $[-R_\ast, R_\ast]$ covers $\frac{1}{2}$ mass shown above. We leverage the half amount of mass to attract mass far from the stellar. For simplicity, we work with the right part of the density, the left part can be obtained by symmetry.

For each $x > R_{\ast}$, we define the least attraction effects of stellar as follows
\begin{equation*}
    K_{R_{\ast}}(x) = \inf_{(x - R_\ast, x + R_\ast)} W'.
\end{equation*}
The inward attraction can be estimated by
\begin{equation*}
    \begin{aligned}
        V(x, t) &= \int_{\mathbb{R}}\, W'(x - \underline{x}) \ud \varrho(\underline{x}, t)\\
        &\ge \underbrace{\tfrac{1}{2} K_{R_{\ast}}(x)}_{\text{attraction of stellar}} - \underbrace{\|W'\|_{L^{\infty}} \int_{x}^{\infty} \varrho(\underline{x}, t) \ud \underline{x}}_{\text{tiny attraction of far-field}} \ge \tfrac{1}{2} K_{R_{\ast}}(x) - \frac{8\|W'\|_{L^{\infty}}E_{0}}{W(x)},
    \end{aligned}
\end{equation*}
where the tiny attraction of far-field is estimated by~\eqref{2-7-mass}. As long as $W'(x)W(x) \to \infty$ as $x \to \infty$, we have
\begin{equation*}
    K_{R_{\ast}}(x) W(x) \to \infty \text{ as } x \to \infty.
\end{equation*}
Therefore, there exists a frontier $\alpha > R_{\ast}$ such that
\begin{equation*}
    V(x, t) \ge V_{0}(x) := \tfrac{1}{4} K_{R_{\ast}}(x) \text{ when } x \ge \alpha.
\end{equation*}
Since $W'$ is uniformly bounded, we have $W(R) \lesssim R$. On the other hand, $W'(R)W(R) \to \infty$ implies that $R^{-1} \lesssim W'(R)$. Hence, we have $\int_{\alpha}^{\infty}V_{0}(x) \ud x = \infty$.

Let $\mathcal{B}(x) = \int_{\alpha}^{x} V_{0}(\underline{x}) \ud \underline{x}$, we build the following barrier
\begin{equation*}
    \varrho_{\mathcal{B}, M}(x) = \big[(1 - \tfrac{1}{m})(M - \mathcal{B}(x))_+\big]^{\frac{1}{m - 1}}
\end{equation*}
for $x \ge \alpha$. We choose $M$ large enough such that $\int_{\alpha}^{\infty} \varrho_{\mathcal{B}, M}(x) \ud x > 1$. Since $\mathcal{B}(\infty) = \infty$, $\varrho_{\mathcal{B}, M}(x)$ is always compactly supported for any $M$, and is solves the equilibrium equation
\begin{equation*}
    \frac{\partial \varrho_{\mathcal{B}, M}(x)^m}{\partial x} + V_{0} \varrho_{\mathcal{B}, M}(x) = 0.
\end{equation*}
Now, let $F(x, t) := \int_{x}^{\infty} \varrho(\underline{x}, t) \ud \underline{x}$ be the accumulation function. It obeys the law (for $x \ge \alpha$):
\begin{equation*}
    \frac{\partial F(x, t)}{\partial t} = m(-F_{x})^{m - 1}F_{xx} + V(x, t) F_{x} \le m(-F_{x})^{m - 1}F_{xx} + V_0(x) F_{x},
\end{equation*}
while barrier $F_{\mathcal{B}}(x) = \int_{x}^{\infty} \varrho_{\mathcal{B}, M}(\underline{x}, t) \ud \underline{x}$ solves the right hand side exactly. Hence $F_{\mathcal{B}}(x)$ is a super-solution of the parabolic equation, and therefore,
\begin{equation*}
    F(x, t) \le F_{\mathcal{B}}(x).
\end{equation*}
Let $x_{0}$ be the unique point such that $F_{\mathcal{B}}(x_0) = 0$ in $(\alpha, \infty)$, we then have $\varrho(x, t) = 0$ for $x \ge x_{0}$. By symmetry, $\varrho(x, t) = 0$ for all $x \le -x_{0}$, and we have the uniform trapping.

~\\
(2) (super-linear) We then consider the case where $\varrho$ is general (not necessarily symmetric). Here we work with the super-linear case, $W(x)/x \to \infty$ and $W'(x) \to \infty$ as $x \to \infty$, and $W'(x) \lesssim (1 + W(x))$ when $x \ge 0$.

Different from the symmetric case, the median may not be controlled for general $W$. For the super-linear case, we are able to anchor the center of the stellar $m(t)$. In particular, from~\eqref{2-6-center} and the super-linear growth,
\begin{equation}\label{2-11-center}
    \bigg|\int_{\mathbb{R}} \,(x - m(t)) \ud \varrho(t)\bigg| \le \int_{\mathbb{R}} \, |x - m(t)| \ud \varrho(t) \lesssim \int_{\mathbb{R}} \, W\big(|x - m(t)|\big) \ud \varrho(t) \le 8 E_{0}.
\end{equation}
Since $\int_{\mathbb{R}}x \ud \varrho(t) = 0$, we have $|m(t)|$ is uniformly bounded. Therefore, there exists $R_\ast$, such that $[-R_\ast, R_\ast]$ covers $\frac{1}{2}$ mass. Similar to the symmetrical case above, we estimate the inward attraction by
\begin{equation*}
    \begin{aligned}
        V(x, t) &= \int_{\mathbb{R}} \, W'(x - \underline{x}) \ud \varrho(\underline{x}, t)\\
        &\ge \underbrace{\tfrac{1}{2} K_{R_{\ast}}(x)}_{\text{attraction of stellar}} - \underbrace{\int_{x}^{\infty} W'(\underline{x} - x)\varrho(\underline{x}, t) \ud \underline{x}}_{\text{tiny attraction of far-field}} \ge \tfrac{1}{2} K_{R_{\ast}}(x) - C_W(1 + 4E_0),
    \end{aligned}
\end{equation*}
where the tiny attraction of far-field is given by $W'(x) \le C_W(1 + W(x))$. Since $K_{R_{\ast}}(x) \to \infty$ as $x \to \infty$, we can find $\alpha$ such that 
\begin{equation*}
    V(x, t) \ge V_{0}(x) := \frac{1}{4}K_{R_{\ast}}(x) \text{ when } x \ge \alpha.
\end{equation*}
The rest follows exactly the same as the barrier method of the symmetric case above.

~\\
(3) (sub-linear) We finally prove the uniform trapping for sub-linear kernels. Here we let
\begin{equation*}
    K(R) = \inf_{(R/2, 3R/2)} W'.
\end{equation*}
and work with the condition
\begin{equation}\label{2-10-psi}
    \Psi(R) := W(R/2) K(R)^{\frac{m + 1}{m - 1}} \to \infty \text{ as } R \to \infty.
\end{equation}

We will use the following lemma which states that the center of stellar remains distant from the free boundary.
\begin{lemma}\label{lemma-10-media}If $\varrho(t)$ is supported within $[-R, R]$, for sufficiently large $R$, one has $|m(t)| \le \frac{1}{4}R$.
\end{lemma}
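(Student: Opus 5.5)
The plan is to argue by contradiction, combining three facts: the center of mass is fixed at $0$, the bound~\eqref{2-6-center} holds, and the tail estimate~\eqref{2-7-mass} holds. All of these are uniform in time and need only $W(x)\to\infty$, not super-linear growth.

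Since $\int_{\mathbb{R}} x \ud \varrho(t) = 0$, we have the identity
\begin{equation*}
    m(t) = \int_{\mathbb{R}} \big(m(t) - x\big) \ud \varrho(t), \qquad\text{so}\qquad |m(t)| \le \int_{\mathbb{R}} |x - m(t)| \ud \varrho(t).
\end{equation*}
It therefore suffices to show that the first absolute moment about the median is at most $R/4$ for $R$ large. Because $\varrho(t)$ is supported in $[-R,R]$, the median $m(t)$ also lies in $[-R,R]$. Hence $|x - m(t)| \le 2R$ for every $x$ in the support.

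Fix an intermediate radius $r = R/8$ and split the integral into the region $|x - m(t)| \le r$ and its complement. On the first region the integrand is at most $r$ and the mass is at most $1$, so this part contributes at most $R/8$. On the complement the integrand is at most $2R$, and by~\eqref{2-7-mass} the mass there is at most $8E_0/W(R/8)$. Altogether,
\begin{equation*}
    |m(t)| \le \frac{R}{8} + 2R\cdot\frac{8E_0}{W(R/8)} = R\Big(\frac18 + \frac{16E_0}{W(R/8)}\Big).
\end{equation*}
Since $W(x)\to\infty$ as $x\to\infty$ (Assumption~\ref{assumption-1}), we can choose $R_0$, depending only on $E_0 = \mathcal{F}(\varrho_0)$ and $W$, such that $16E_0/W(R/8) \le 1/8$ for all $R \ge R_0$. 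This gives $|m(t)| \le R/4$ uniformly in $t$.

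There is no real obstacle here. The only point to check is that the constant $E_0$ in~\eqref{2-6-center} and~\eqref{2-7-mass} is time independent, which holds because $\mathcal{F}$ is a Lyapunov functional. This uniformity is what makes the threshold $R_0$ independent of $t$.
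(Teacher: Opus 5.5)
Your proof is correct and is essentially the paper's argument: both bound $|m(t)|$ by $\int_{\mathbb{R}}|x-m(t)|\ud\varrho(t)$ using the zero center of mass (the first inequality in~\eqref{2-11-center}), split at radius $R/8$, and use $|x-m(t)|\le 2R$ on the support together with~\eqref{2-7-mass} to reach $|m(t)|\le R\big(\tfrac18+16E_0/W(R/8)\big)$. Your opening mention of contradiction is unnecessary, since what you actually give is a direct argument, as in the paper.
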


We argue by contradiction based on this lemma. Assume that the support of $\varrho(t)$ reaches a very far regions. Let $t_{R}$ be the first time that one free boundary hits $\partial B_R$.\footnote{$t_{R}$ is always finite due to the finite propagation of degenerate diffusion.} Without loss of generality, we assume it is the right free boundary. By Lemma~\ref{lemma-10-media}, $|m(t_R)| \le \tfrac{1}{4}R$ at $t_{R}$. If $R$ is large, by~\eqref{2-7-mass}, at least $\frac{1}{2}$ mass is contained in $[-\frac{1}{2}R, \frac{1}{2}R]$.

Suppose that the right free boundary is moving outward, by the Dirac's law in Proposition~\ref{proposition-7-dirac},
\begin{equation*}
    D^{+}b(t) = -p_{x}(b(t)^{-}, t) - V(b(t),t) > 0.
\end{equation*}
At $(R, t_R)$, the attraction force is
\begin{equation*}
    V(R, t_R) = \int_{-\infty}^{R} W'(R - x) \varrho(x, t_R) \ud x \ge \tfrac{1}{2} K(R)
\end{equation*}
because $\varrho(t)$ is vacuum outside $[-R, R]$. This means that
\begin{equation*}
    -p_{x}(R^{-}, t_R) \ge \tfrac{1}{2} K(R).
\end{equation*}
On the other hand, by using $p(R, t_R) = 0$, and Aronson-Benilan estimate $p_{xx} \ge C_{\mathrm{AB}}$ in Proposition~\ref{proposition-5-AB} for a uniform constant $C_{\mathrm{AB}}$, we can use a Taylor expansion of pressure to get
\begin{equation}\label{5-4-boundary}
    p(R - s, t_{R}) \ge -p_{x}(R^{-}, t_R)s - \tfrac{1}{2}C_{\mathrm{AB}}s^2 \ge -p_{x}(R^{-}, t_R)s/2
\end{equation}
for $s \le s_0 := -p_{x}(R^{-}, t_R)/C_{\mathrm{AB}}$.\footnote{The pressure gradient $p_x$ is uniformly bounded, which is one of the main results in \cite{zhang2022}.} Hence, to break through the attraction force, one gains a pressure layer. Converting the pressure inequality back into density,
\begin{equation*}
    \int_{R - s_0}^R \varrho(x, t_R) \ud x \ge C_m (-p_{x}(R^{-}, t_R))^{\frac{m + 1}{m - 1}} \ge \widehat{C}_{m} K(R)^{\frac{m + 1}{m - 1}}
\end{equation*}
with uniform constants $C_m$, $\widehat{C}_{m}$, and $s_0$. Hence, when $R$ is large, the domain of integration on the left hand side is completely $R/2$-distant to $m(t)$. From~\eqref{2-7-mass}, we have
\begin{equation*}
    \frac{8E_0}{W(R/2)} \ge \int_{R - s_0}^R \varrho(x, t_R) \ud x \ge \widehat{C}_{m} K(R)^{\frac{m + 1}{m - 1}},
\end{equation*}
which means that
\begin{equation*}
    \Psi(R) \le C
\end{equation*}
for a uniform constant $C$, which contradicts (or is ruled out by) to the condition~\eqref{2-10-psi}.
\end{proof}

\begin{remark}The main intuition for the computation of the third scenario is that, when the pressure gradient is sharp, according to Dirac's law, the bulk domain expands. If we ignore the attraction effect, the pressure will relaxes eventually to $0$. On the other hand, under attraction, if the pressure reaches some small pressure level below, it may balances the attraction at some time, and the bulk domain will stop evolving.

However, the main obstruction is that (when the attraction kernel is weak, for example, a negative Gaussian), when the pressure relaxes, free boundaries moves outward while simultaneously making themselves subjected to weaker attraction. In this case, the solution may continue to evolve without stopping, and we do not have a uniform trapping.

\end{remark}

\begin{proof}[Proof of Lemma~\ref{lemma-10-media}]Since $|m(t)| \le R$, we have $|x - m(t)| \le 2R$ for all $x$ in the support of $\varrho(t)$. By using~\eqref{2-11-center}, we have,
\begin{equation*}
    \begin{aligned}
        |m(t)| &\le \int_{\mathbb{R}}\, |x - m(t)| \ud \varrho(x, t)\\
        &\le \int_{m(t)-\alpha}^{m(t)+\alpha} |x - m(t)| \ud \varrho(x, t) + \bigg[\int_{-\infty}^{m(t) - \alpha} + \int_{m(t) + \alpha}^{+\infty}\bigg] |x - m(t)| \ud \varrho(x, t)\\
        &\le \alpha\int_{m(t)-\alpha}^{m(t)+\alpha}\ud \varrho(x, t) + 2R\bigg[\int_{-\infty}^{m(t) - \alpha} + \int_{m(t) + \alpha}^{+\infty}\bigg] \ud \varrho(x, t)\\
        &\le \alpha + 2R\bigg[\int_{-\infty}^{m(t) - \alpha} + \int_{m(t) + \alpha}^{+\infty}\bigg] \ud \varrho(x, t)
    \end{aligned}
\end{equation*}
for any $\alpha > 0$. Let $\alpha = R/8$, by~\eqref{2-7-mass}, we have
\begin{equation*}
    |m(t)| \le \alpha + \frac{16RE_0}{W(\alpha)} = \bigg[\frac{1}{8} + \frac{16 E_0}{W(R/8)}\bigg]R
\end{equation*}
which is less than $R/4$ when $R$ is large, because $W(R/8) \to \infty$ as $R \to \infty$.
\end{proof}

\begin{proof}[Proof of Proposition~\ref{proposition-4-long-time}]The main idea behind the proof of uniform non-degeneracy is based on the fact that the attraction force is non-vanishing, and we can get a uniform lower bound of attraction by uniform trapping (Proposition~\ref{proposition-8-trapping}). Therefore, pressure at free boundary should be non-degenerate to balances the attraction to stop the free boundary motion.

In particular, in the Dirac's law of free boundary (Proposition~\ref{proposition-7-dirac}),
\begin{equation*}
    D^{+}a(t) = -p_{x}(a(t)^{+}, t) - V(a(t),t),\quad D^{+}b(t) = -p_{x}(b(t)^{-}, t) - V(b(t),t)
\end{equation*}
where $V = W^{\prime} \ast \varrho$. By Proposition~\ref{proposition-4-existence}, there exists $M > 0$ such that $\|\varrho(t)\|_{L^{\infty}} \le M$, and we have
\begin{equation*}
    \int_{a(t) + \varepsilon_0}^{b(t)} \varrho(x, t) \ud x \ge \tfrac{1}{2} \text{ where } \varepsilon_{0} := 1/(2M)
\end{equation*}
Hence, at the left free boundary, the attraction force is given by
\begin{equation*}
    \begin{aligned}
        V(a(t), t) &= \int_{a(t)}^{b(t)}W'(a(t) - \underline{x}) \ud \varrho(\underline{x}, t) \le \int_{a(t) + \varepsilon_0}^{b(t)}W'(a(t) - \underline{x}) \ud \varrho(\underline{x}, t)\\
        &\le -\tfrac{1}{2}\Lambda
    \end{aligned}
\end{equation*}
where $\Lambda := \inf_{(\varepsilon_{0}, 2R_\ast)} W'$ where $2R_\ast$ is the uniform upper bound for $b(t) - a(t)$, according to the uniform trapping in Proposition~\ref{proposition-8-trapping}. By reflection symmetry, we also have $V(b(t), t) \ge \tfrac{1}{2}\Lambda$.

For simplicity, we work with the left free boundary. According to \cite[Theorem 4.1]{zhang2022}, one has the free boundary non-degeneracy (after any short time $t_0$)
\begin{equation}\label{4-5-pressure}
    p_{x}(a(t)^{+}, t) \ge e^{-\sigma(t - t_{0})} p_{x}(a(t_0)^{+}, t_0),
\end{equation}
for all $t_{0} \le t \le t_{1}$ with a positive constant $\sigma = \sigma(t_{0})$. Hence, if a boundary slope becomes very small, it must also have been small throughout a preceding fixed amount of time.

And in this fixed amount of time, the attraction dominates the motion of free boundary. We argue that $p_x$ can not be very small by contradiction. Based on~\eqref{4-5-pressure}, consider the following backward in time control:
\begin{equation}\label{4-4-pressure}
    p_{x}(a(s)^{+}, s) \le e^{-\sigma(t - s)} p_{x}(a(t)^{+}, t)
\end{equation}
for $t_0 \le s \le t$. Suppose that for some $t \ge t_0 + \tau$,
\begin{equation*}
    p_{x}(a(t)^{+}, t) \le \tfrac{1}{4}\Lambda e^{-\sigma \tau}
\end{equation*}
By using~\eqref{4-4-pressure}, we have $p_{x}(a(s)^+, s) \le \frac{1}{4}\Lambda$ for all $t - \tau \le s \le t$, we have
\begin{equation*}
    D^+a(t) = -p_{x}(a(s)^+, s) - V(a(t), t) \ge \tfrac{1}{4}\Lambda, \text{ in } [t - \tau, t].
\end{equation*}
Integrating free boundary vecolcity bound from $t-\tau \to t$, we are able to get
\begin{equation*}
    a(t) - a(t - \tau) \ge \tfrac{1}{4}\Lambda \tau
\end{equation*}
On the other hand, by uniform trapping, we have $a(t) - a(t - \tau) \le R_{\ast}$ for a constant $R_\ast$. Hence, taking $\tau > \tau_0 = 4R_\ast/\Lambda$ leads to a contradiction.

It implies that $p_x(a(t)^+, t)$ is uniformly bounded from below when $t \ge t_0 + \tau_0$, the same applies to the right free boundary.
\end{proof}

\section{Proofs of regularity results}\label{sec:5-regularity}
In this part, we proof the regularity needed for linearization in Section~\ref{sec:3-2-regularity}. 
\begin{proof}[Proof of Proposition~\ref{theorem:6-regularity}]For \textit{interior regularity}, we leverage the De Giorgi-Nash parabolic estimates in non-divergence form due to Krylov--Safonov \cite{krylov1981certain}, to prove a $C^{1, \alpha}$ interior regularity of $p$. Let $u = p_{x}$. Differentiating the pressure equation~\eqref{2-6-pressure} in $x$, we have
\begin{equation*}
    \dot{u} = (m - 1) p(x) u_{xx} + \big((m + 1)u + V(x)\big) u_x + mu V_{x}(x, t) + (m - 1)u V_{xx}(x, t).
\end{equation*}
By Lemma~\ref{proposition-12-up}, $p$ is strictly positive in a compact subset $K$ of $\Omega_{\ast} \cap \Omega(t)$ after some time $t_{0}$. $K$ can be made close to $\Omega(t)$ when $t_{0}$ is large.

Hence, the parabolic PDE becomes uniformly parabolic after entering the following region $K \times [t_{0}, \infty)$, we let $U$ be an open subset of $K$, and $\tau > t_{0}$
\begin{equation*}
    Q := U \times [\tau, \infty).
\end{equation*}
The Krylov--Safonov estimate of parabolic PDE in non-divergence form \cite{krylov1981certain} then gives $\|u\|_{C^{0, \beta}(Q)}$ is uniformly bounded. Hence, we have
\begin{equation*}
    \|p\|_{C^{1, \beta}(Q)} \le C.
\end{equation*}
for some constant $C$ depending on $U, \tau$, and a H\"older exponent $0 < \beta \le 1$.

For \textit{boundary regularity}, we work with the left free boundary, the right boundary can be obtained by reflection symmetry. By the second-order expansion of pressure layer in~\eqref{5-4-boundary}, we have,
\begin{equation*}
    p_{x}(a(t) + s, t) \ge c_{0} - \lambda s,
\end{equation*}
near the right free boundary after some time $t_{0}$, where $c_{0}$ is a lower bound for $p_{x}(a(t)^{+}, t)$ due to the non-degeneracy of $p$. We then pick $\varepsilon_{0}$ such that $\varepsilon_{0}\lambda < \frac{1}{2}c_{0}$, and we have
\begin{equation*}
    \tfrac{1}{2}c_{0} \le p_{x}(x, t) \le c_{1},\quad \text{ for all } a(t) - \varepsilon_{0} < x < a(t)
\end{equation*}
The derivative bound makes the pressure function $p(x, t)$ strictly decreasing near the free boundary. Hence, the pressure function $p$ is invertible in this region, and there exists a uniform pressure level $z_{0}$ such that we can define the inverse function $Z(t)$ as follows
\begin{equation*}
    p(Z(z, t), t) = z,\quad 0 < z < z_{0}
\end{equation*}
Let $A(z, t) = -1/p_{x}(Z(z, t), t)$, we have, due to the non-degeneracy of $p$,
\begin{equation}\label{5-2-bound}
    C_{0} \le A(z, t) \le C_{1}
\end{equation}
near the right free boundary $a(t)$, for positive constants $C_{0}$ and $C_{1}$. We next show that $A$ satisfies a parabolic equation. In particular, by differentiating the identity $p(Z(z, t), t)$, one has
\begin{equation*}
    -A = Z_{z},\quad -\dot{p}/p_{x} = \dot{Z},\quad p_{x} = -Z^{-1},\quad p_{xx} = -Z_{z}Z^{-3}
\end{equation*}
Hence, the inverse function $Z$ obeys the following law
\begin{equation*}
    \dot{Z} = -(m - 1)zA^{-2}A_{z} + A^{-1} - V(Z, t) + (m - 1) zV_x(Z, t)A.
\end{equation*}
By differentiating the identity $X_{z} = -A$ in time, we are able to get
\begin{equation}\label{4-5-A}
    \begin{aligned}
        \dot{A} &= (m - 1)z^{-\alpha} \partial_{z}(z^{\alpha + 1}A^{-2}A_{z})\\
        &\quad- (m - 1)zV_x(Z, t)A_{z} - mV_x(Z, t)A + (m - 1)z V_{xx}(Z, t) A^2
    \end{aligned}
\end{equation}
where $\alpha = 1/(m - 1)$. Here the diffusion coefficient $A^{-2}$ is uniformly bounded up to free boundary due to~\eqref{5-2-bound}. Therefore, the leading order of the degeneracy near the free boundary is $z^{\alpha + 1}$.

We then present a radial lifting technique\footnote{Once again, this result is due to ChatGPT 6. It claims that this method comes from the classic Bessel operator \cite{wu2016}, which is related to the radial Laplacian.}. Consider the following change of variable:
\begin{equation*}
    s = 2z^{\frac{1}{2}},\quad \mathcal{A}(s, t) = A(s^2/4, t).
\end{equation*}
By taking this form of solution into~\ref{4-5-A}, we have
\begin{equation*}
    \dot{\mathcal{A}} = (m - 1) s^{-(2\alpha + 1)} \partial_{s}(s^{2\alpha + 1}\mathcal{A}^{-2} \mathcal{A}_{s}) - \frac{m - 1}{2}sV_x(Z, t)\mathcal{A}_{s} + N(s, t)
\end{equation*}
where $N$ is a uniformly bounded term.

For $x \in \mathbb{R}^3$ and suppose that formally $|x| = s$, we can lift the function $\mathcal{A}$ into a radial function on $\mathbb{R}^3$ according to
\begin{equation*}
    \mathcal{B}(x, t) := \mathcal{A}(|x|, t).
\end{equation*}
Let $w(x) = |x|^{2\alpha - 1}$, the radial function $\mathcal{B}$ then obeys the following law of evolution:
\begin{equation*}
    w\dot{\mathcal{B}} = (m - 1) \mathrm{div}(w \mathcal{B}^{-2}\nabla \mathcal{B}) + w V_{\mathcal{B}} \cdot \nabla \mathcal{B} + w N_{\mathcal{B}}(x, t).
\end{equation*}
Here $w$ have a growth rate $2\alpha - 1 = (3 - m)/(m - 1)$ with $-1 < 2\alpha - 1 \le 1$ when $m \ge 2$. Hence it belongs to the diffusion coefficient considered by Chiarenza--Serapioni's Harnack inequality \cite{Chiarenza1984} for degenerate parabolic equations, and we can get
\begin{equation*}
    |\mathcal{B}(x, t) - \mathcal{B}(x_{0}, t)| \lesssim |x - x_{0}|^{2\beta},
\end{equation*}
for some constant $0 < \beta \le 1$. Hence, when $x$ and $x_0$ is co-linear, let $s = |x|$ and $s_0 = |x_0|$, we have
\begin{equation*}
    |\mathcal{A}(s, t) - \mathcal{A}(s_{0}, t)| = |\mathcal{B}(x, t) - \mathcal{B}(x_{0}, t)| \lesssim |x - x_{0}|^{2\beta} = |s - s_{0}|^{2\beta},
\end{equation*}
and therefore,
\begin{equation*}
    |A(z, t) - A(z_{0}, t)| = |\mathcal{A}(2z^{\frac{1}{2}}, t) - \mathcal{A}(2z^{\frac{1}{2}}_{0}, t)| \lesssim |z - z_{0}|^{\beta}.
\end{equation*}
Because $A = -1/p_x$, by the non-degeneracy of $A$, we can show that
\begin{equation*}
    |p_x(x, t) - p_x(x_{0}, t)| \lesssim |x - x_{0}|^{\beta}
\end{equation*}
near the free boundary. Combining the interior estimate above, we have shown the H\"older regularity over the whole domain.
\end{proof}

\begin{proof}[Proof of Theorem~\ref{theorem-4-optimal}]We denote the optimal transport map by $T(t) = T_{\varrho_{\ast} \to \varrho(t)}$, and let $\varrho_{A}(t)$ be the pull-back of the density $\varrho(t)$ using the affine map $A$ given by
\begin{equation*}
    A(x, t) = -R - (R + x) \frac{\ell_{1}(t) - \ell_{0}(t)}{2R}.
\end{equation*}
By using Jacobi formula, let $a(t) = \partial_{x}A(x, t)$, we have
\begin{equation*}
    \varrho_{A}(t) = a(t) \varrho(A(x, t), t).
\end{equation*}
We let $S(t)$ be the optimal transport map from $\varrho_{\ast}$ to $\varrho_{A}(t)$, we have
\begin{equation*}
    T(t) = A(t) \circ S(t) \text{ and } \partial_{x}T(x, t) = a(t) \partial_{x}S(x, t).
\end{equation*}
Let $\alpha = 1/(m - 1)$. According to the definition of pressure function, we are able to get
\begin{equation*}
    \varrho_{\ast} = C_{m}p_{\ast}^{\alpha},\quad \varrho_{A}(t) = a(t)C_{m}p_{A}(t)^{\alpha}
\end{equation*}
with a constant $C_{m} = (1 - 1/m)^{\alpha}$.

In the following, we leveraging the regularity result (Proposition~\ref{theorem:6-regularity}) and non-degeneracy result (Proposition~\ref{proposition-4-long-time}) of pressure $p_{\ast}$ and $p_{A}$ to obtain the regularity of the flow map $S$, especially near the free boundary. For simplicity, we work with the left free boundary, and the estimate for right one can be obtained by reflection symmetry.

By the non-degeneracy of pressure at the left free boundary $x = - R$, we have
\begin{equation*}
    p_{A}^{\prime}(-R^{+}, t) \text{ exists and is non-zero.}
\end{equation*}
We net show that $p_{A}(x, t)/(x + R)$ is well-behaved in $[-R, 0]$. Consider the following identity,
\begin{equation*}
    p_{A}(x, t)/(x + R) = \int_{0}^{1} p_{A}^{\prime}(s(x + R) - R, t) \ud s.
\end{equation*}
By the H\"older continuity of pressure derivatives, we have
\begin{equation*}
    \begin{aligned}
        \big|p_{A}(x, t)/(x + R) - p_{A}(x_{0}, t)/(x_{0} + R)\big| &\le \int_{0}^{1} \big|p_{A}^{\prime}(s (x + R) - R, t) - p_{A}^{\prime}(s (x_{0} + R) - R, t)\big| \ud s\\
        &\le \|p_{A}^{\prime}\|_{C^{\alpha}} \int_{0}^{1} s^\alpha \ud s \times |x - x_{0}|^\alpha\\
        &= \frac{1}{1 + \alpha}\|p_{A}^{\prime}\|_{C^{\alpha}} \times |x - x_{0}|^\alpha.
    \end{aligned}
\end{equation*}
Hence, at least in a small region of the left free boundary, $p_{A}(x, t)/(x + R)$ is non-zero. Combining the interior positivity of the pressure function $p_{A}$, we are able to show, there exist positive uniform bounds $c_{0}$ and $c_{1}$, such that for all $x \in [-R, 0]$,
\begin{equation}\label{4-2-bound}
    c_{0} \le p_{A}(x, t)/(x + R) \le c_{1}.
\end{equation}
Therefore, there exist bounded and Holder continuous functions $u_{A}$ and $u_{\ast}$ such that
\begin{equation*}
    \varrho_{A}(x, t) = (x + R)^{\alpha} u_{A}(x, t),\quad \varrho_{\ast}(x) = (x + R)^{\alpha} u_{\ast}(x)
\end{equation*}
for all $x \in [-R, 0]$. Let $F_{A}$ and $F_{\ast}$ be the accumulation function of $\varrho_{A}$ and $\varrho_{\ast}$ respectively, we are able to deduce that
\begin{equation}\label{4-4-q}
    F_{A}(x, t) = (x + R)^{\alpha + 1} Q_{A}(x, t),\quad F_{\ast}(x) = (x + R)^{\alpha + 1} Q_{\ast}(x)
\end{equation}
for some bounded and Holder continuous functions $Q_{A}$ and $Q_{\ast}$ with $C_{0} \le Q_{A}, Q_{\ast} \le C_{1}$ for some constants $C_{0}$ and $C_{1}$.

Now, according to the definition of the optimal transport map $S(t)$ from $\varrho_{\ast}$ to $\varrho_{A}(t)$, we have
\begin{equation*}
    F_{A}(t) \circ S(t) = F_{\ast}.
\end{equation*}
By taking the form~\eqref{4-4-q} into the above identity, we have
\begin{equation}\label{4-6-s}
    \frac{S(x, t)}{x + R} = \bigg[\frac{Q_{\ast}(x)}{Q_{A}(S(x, t), t)}\bigg]^{1/(\alpha + 1)}.
\end{equation}
Since $C_{0} \le Q_{A}, Q_{\ast} \le C_{1}$, we have
\begin{equation}\label{4-5-s}
    K_{0}(x + R) \le S(x, t) \le K_{1}(x + R) \text{ for all } x \in [-R, 0],
\end{equation}
for some positive constants $K_{0}$ and $K_{1}$ around the left free boundary. In addition, since $S(t)$ is the optimal transport map from $\varrho_{\ast}$ to $\varrho_{A}(t)$, we have, by the Jacobi formula of transportation,
\begin{equation*}
    \partial_{x}S(x, t) = \frac{\varrho_{\ast}(x)}{\varrho_{A}(S(x, t), t)} = \frac{(x + R)^{\alpha}}{(S(x, t) + R)^{\alpha}} \frac{u_{\ast}(x)}{u_{A}(S(x, t), t)}.
\end{equation*}
By using~\eqref{4-5-s}, $\partial_{x}S(x, t)$ is uniformly bounded on $[-R, 0]$.

Since $Q_{\ast}, Q_{A}$ are $\beta$-H\"older continuous, then from~\eqref{4-6-s}, $S(x, t)/(x + R)$ is $\beta$-H\"older continuous. From~\eqref{4-5-s}, we have $\partial_{x}S(x, t)$ is $\beta$-H\"older continuous. Finally, since $T(t) := A(t) \circ S(t)$ where $A$ is a uniformly bounded affine map, we have
\begin{equation*}
    T_{\varrho_{\ast} \to \varrho(t)} := T(t) \text{ is } C^{1, \beta}(\Omega_{\ast}) \text{ and } 0 < c_{0} \le T(x, t) \le c_{1}
\end{equation*}
with positive constants $c_{0}$ and $c_{1}$. By reflection symmetry, we can extend the domain from $[-R, 0]$ to $\Omega_{\ast} := [-R, R]$, which concludes the proof.
\end{proof}

\appendix
\section{Technical lemmas}\label{ap:a}
\begin{proof}[Proof of interpolation inequality in Lemma~\ref{lemma-3-interpolation}]
Based on the inequality $W_{1}(\varrho(t), \varrho_{\ast}) \le W_{2}(\varrho(t), \varrho_{\ast})$ (see, for example, \cite[Chapter 6]{villani2008optimal}), we prove that 
\begin{equation*}
    \|\varrho(t) - \varrho_{\ast}\|_{L^{\infty}} \lesssim W_{1}(\varrho(t), \varrho_{\ast})^{\frac{1}{2m - 1}},
\end{equation*}
where $W_{1}$ has a dual characterization (Kantorovich-Rubinstein duality) that is related to $L^{\infty}$ norm as follows
\begin{equation}\label{2-3-KR}
    W_{1}(\varrho(t), \varrho_{\ast}) = \sup_{\mathrm{Lip}(\varphi) \le 1} \int_{\mathbb{R}} \varphi \ud \varrho(t) - \int_{\mathbb{R}} \varphi \ud \varrho_{\ast}.
\end{equation}
In the following, we find a suitable multiplier to deduce the lower bound of $W_{1}$ based on this sharp dual characterization. 

According to the Lipschitz continuity of the pressure function $p(t)$, we have
\begin{equation*}
    \begin{aligned}
        |\varrho(x, t) - \varrho(x_0, t)| &= C_{m} \big|p(x, t)^{\frac{1}{m - 1}} - p(x_0, t)^{\frac{1}{m - 1}}\big|\\
        &\le C_{m}|p(x, t) - p(x_0, t)|^{\frac{1}{m - 1}} \lesssim |x - x_0|^{\alpha}
    \end{aligned}
\end{equation*}
where $C_{m} = (1 - 1/m)^{1/(m - 1)}$ and $\alpha = 1/(m - 1)$. It shows that the solution $\varrho(t)$ pertains H\"older continuity, and the H\"older norm can be made independent of time. By the H\"older continuity of $\varrho_{\ast}$, we have
\begin{equation*}
    \|\varrho(t) - \varrho_{\ast}\|_{C^{\alpha}(\mathbb{R})} \le C_{m, \tau},
\end{equation*}
with a constant $C_{m, \tau}$. Let $a = \|\varrho(t) - \varrho_{\ast}\|_{L^{\infty}}$, without loss of generality, suppose that we can find a point $x_{0}$ such that
\begin{equation*}
    a = \varrho(x_{0}, t) - \varrho_{\ast}(x_{0}).
\end{equation*}
Otherwise, if we switch the role between $\varrho(x_{0}, t)$ and $\varrho_{\ast}(x_{0})$, the following argument also applies. Now, the H\"older continuity of $\varrho(t) - \varrho_{\ast}$ provides a quantitative deviation from $a$, and we have
\begin{equation}\label{2-4-holder}
    \varrho(x, t) - \varrho_{\ast}(x) \ge \tfrac{1}{2}a,\quad \text{ for all } |x - x_{0}| \le \big(\tfrac{1}{2}a/C_{m, \tau}\big)^{1/\alpha}.
\end{equation}
Consider the following multiplier in the Kantorovich–Rubinstein form~\eqref{2-3-KR}
\begin{equation*}
    \varphi_{\ast}(x) = \big(\big(\tfrac{1}{2}a/C_{m, \tau}\big)^{1/\alpha} - |x - x_{0}|\big)_{+}
\end{equation*}
which is a triangle function supported within the region where~\eqref{2-4-holder} holds valid. Therefore, we have
\begin{equation*}
    W_{1}(\varrho(t), \varrho_{\ast}) \ge \int_{\mathbb{R}} \varphi_{\ast} \ud (\varrho(t) - \varrho_{\ast}) \ge \tfrac{1}{2}a \int_{\mathbb{R}} \varphi_{\ast} \ud x = \tfrac{1}{2}a \big(\tfrac{1}{2}a/C_{m, \tau}\big)^{2/\alpha}.
\end{equation*}
Sorting the above inequality, we are able to get
\begin{equation*}
    \|\varrho(t) - \varrho_{\ast}\|_{L^{\infty}} =: a \lesssim W_{1}(\varrho(t), \varrho_{\ast})^{\alpha/(\alpha + 2)} = W_{1}(\varrho(t), \varrho_{\ast})^{\frac{1}{2m - 1}},
\end{equation*}
which concludes the proof.
\end{proof}

\bibliographystyle{alpha}

\begin{thebibliography}{CCCFR26}

\bibitem[AGS08]{ambrosio2008gradient}
Luigi Ambrosio, Nicola Gigli, and Giuseppe Savar{\'e}.
\newblock {\em Gradient flows: in metric spaces and in the space of probability measures}.
\newblock Springer Science \& Business Media, 2008.

\bibitem[AKSG19]{arbel2019maximum}
Michael Arbel, Anna Korba, Adil Salim, and Arthur Gretton.
\newblock Maximum mean discrepancy gradient flow.
\newblock {\em Advances in neural information processing systems}, 32, 2019.

\bibitem[BBCG08]{bakry2008simple}
Dominique Bakry, Franck Barthe, Patrick Cattiaux, and Arnaud Guillin.
\newblock A simple proof of the poincar{\'e} inequality for a large class of probability measures.
\newblock 2008.

\bibitem[BCC08]{blanchet2008convergence}
Adrien Blanchet, Vincent Calvez, and Jos{\'e}~A Carrillo.
\newblock Convergence of the mass-transport steepest descent scheme for the subcritical {Patlak--Keller--Segel} model.
\newblock {\em SIAM Journal on Numerical Analysis}, 46(2):691--721, 2008.

\bibitem[BCGC23]{bailo2023aggregation}
Rafael Bailo, Jos{\'e}~A Carrillo, and David G{\'o}mez-Castro.
\newblock Aggregation-diffusion equations for collective behaviour in the sciences.
\newblock In {\em International Congress on Industrial and Applied Mathematics}, pages 177--200. Springer, 2023.

\bibitem[B{\'E}06]{bakry2006diffusions}
Dominique Bakry and Michel {\'E}mery.
\newblock Diffusions hypercontractives.
\newblock In {\em S{\'e}minaire de Probabilit{\'e}s XIX 1983/84: Proceedings}, pages 177--206. Springer, 2006.

\bibitem[Bed10]{bedrossian2010intermediate}
Jacob Bedrossian.
\newblock Intermediate asymptotics for critical and supercritical aggregation equations and patlak-keller-segel models.
\newblock {\em arXiv preprint arXiv:1009.6187}, 2010.

\bibitem[BGG13]{bolley2013uniform}
Fran{\c{c}}ois Bolley, Ivan Gentil, and Arnaud Guillin.
\newblock Uniform convergence to equilibrium for granular media.
\newblock {\em Archive for Rational Mechanics and Analysis}, 208(2):429--445, 2013.

\bibitem[BGL14]{bakry2014analysis}
Dominique Bakry, Ivan Gentil, and Michel Ledoux.
\newblock {\em Analysis and geometry of Markov diffusion operators}, volume 103.
\newblock Springer, 2014.

\bibitem[BJM16]{bonnefont2016note}
Michel Bonnefont, Ald{\'e}ric Joulin, and Yutao Ma.
\newblock A note on spectral gap and weighted poincar{\'e} inequalities for some one-dimensional diffusions.
\newblock {\em ESAIM: Probability and Statistics}, 20:18--29, 2016.

\bibitem[CACP25]{castin2025unified}
Val{\'e}rie Castin, Pierre Ablin, Jos{\'e}~Antonio Carrillo, and Gabriel Peyr{\'e}.
\newblock A unified perspective on the dynamics of deep transformers.
\newblock {\em arXiv preprint arXiv:2501.18322}, 2025.

\bibitem[CCCFR26]{chizat2026quantitative}
L{\'e}na{\"\i}c Chizat, Maria Colombo, Roberto Colombo, and Xavier Fern{\'a}ndez-Real.
\newblock Quantitative convergence of {Wasserstein} gradient flows of kernel mean discrepancies.
\newblock {\em arXiv preprint arXiv:2603.01977}, 2026.

\bibitem[CCV15]{carrillo2015ground}
Jos{\'e}~Antonio Carrillo, Daniele Castorina, and Bruno Volzone.
\newblock Ground states for diffusion dominated free energies with logarithmic interaction.
\newblock {\em SIAM Journal on Mathematical Analysis}, 47(1):1--25, 2015.

\bibitem[CCY19]{carrillo2019aggregation}
Jos{\'e}~A Carrillo, Katy Craig, and Yao Yao.
\newblock Aggregation-diffusion equations: dynamics, asymptotics, and singular limits.
\newblock In {\em Active Particles, Volume 2: Advances in Theory, Models, and Applications}, pages 65--108. Springer, 2019.

\bibitem[CDP19]{carrillo2019existence}
JA~Carrillo, MG~Delgadino, and FS~Patacchini.
\newblock Existence of ground states for aggregation-diffusion equations.
\newblock {\em Analysis and applications}, 17(03):393--423, 2019.

\bibitem[CGM08]{cattiaux2008probabilistic}
Patrick Cattiaux, Arnaud Guillin, and Florent Malrieu.
\newblock Probabilistic approach for granular media equations in the non-uniformly convex case.
\newblock {\em Probability theory and related fields}, 140(1):19--40, 2008.

\bibitem[CHMV18]{carrillo2018ground}
Jos{\'e}~A Carrillo, Franca Hoffmann, Edoardo Mainini, and Bruno Volzone.
\newblock Ground states in the diffusion-dominated regime: Ja carrillo et al.
\newblock {\em Calculus of variations and partial differential equations}, 57(5):127, 2018.

\bibitem[CHVY16]{carrillo2016nonlinear}
Jos{\'e}~A Carrillo, Sabine Hittmeir, Bruno Volzone, and Yao Yao.
\newblock Nonlinear aggregation-diffusion equations: radial symmetry and long time asymptotics.
\newblock {\em arXiv preprint arXiv:1603.07767}, 2016.

\bibitem[CKY13]{Chayes}
Lincoln Chayes, Inwon Kim, and Yao Yao.
\newblock An aggregation equation with degenerate diffusion: Qualitative property of solutions.
\newblock {\em SIAM Journal on Mathematical Analysis}, 45(5):2995--3018, 2013.

\bibitem[CMV03]{carrillo2003kinetic}
Jos{\'e}~A Carrillo, Robert~J McCann, and C{\'e}dric Villani.
\newblock Kinetic equilibration rates for granular media and related equations: entropy dissipation and mass transportation estimates.
\newblock {\em Revista Matematica Iberoamericana}, 19(3):971--1018, 2003.

\bibitem[CS84]{Chiarenza1984}
Filippo~M Chiarenza and Raul~P Serapioni.
\newblock A harnack inequality for degenerate parabolic equations.
\newblock {\em Communications in partial differential equations}, 9(8):719--749, 1984.

\bibitem[DEGS24]{durmus2024sticky}
Alain Durmus, Andreas Eberle, Arnaud Guillin, and Katharina Schuh.
\newblock Sticky nonlinear sdes and convergence of mckean--vlasov equations without confinement.
\newblock {\em Stochastics and Partial Differential Equations: Analysis and Computations}, 12(3):1855--1906, 2024.

\bibitem[Den26]{deng2026quantitative}
Yunhong Deng.
\newblock Quantitative stability of aggregation-diffusion under local attraction.
\newblock {\em arXiv preprint arXiv:2610.07264}, 2026.

\bibitem[DGW26]{deng2026homogenization}
Yunhong Deng, Yuan Gao, and Li~Wang.
\newblock Homogenization of time-discrete gradient flows.
\newblock {\em arXiv preprint arXiv:2607.25206}, 2026.

\bibitem[DM05]{denzler2005fast}
Jochen Denzler and Robert~J McCann.
\newblock Fast diffusion to self-similarity: complete spectrum, long-time asymptotics, and numerology.
\newblock {\em Archive for rational mechanics and analysis}, 175(3):301--342, 2005.

\bibitem[DMNV25]{di2025inexact}
Simone Di~Marino, Emanuele Naldi, and Silvia Villa.
\newblock Inexact jko and proximal-gradient algorithms in the {Wasserstein} space.
\newblock {\em arXiv preprint arXiv:2505.23517}, 2025.

\bibitem[DYY22]{delgadino2022uniqueness}
Matias~G Delgadino, Xukai Yan, and Yao Yao.
\newblock Uniqueness and nonuniqueness of steady states of aggregation-diffusion equations.
\newblock {\em Communications on Pure and Applied Mathematics}, 75(1):3--59, 2022.

\bibitem[FRF22]{fernandez2022continuous}
Xavier Fern{\'a}ndez-Real and Alessio Figalli.
\newblock The continuous formulation of shallow neural networks as {Wasserstein}-type gradient flows.
\newblock In {\em Analysis at Large: Dedicated to the Life and Work of Jean Bourgain}, pages 29--57. Springer, 2022.

\bibitem[GLPR25]{geshkovski2025mathematical}
Borjan Geshkovski, Cyril Letrouit, Yury Polyanskiy, and Philippe Rigollet.
\newblock A mathematical perspective on transformers.
\newblock {\em Bulletin of the American Mathematical Society}, 62(3):427--479, 2025.

\bibitem[Jac25]{jacobs2025}
Matt Jacobs.
\newblock Lagrangian solutions to the porous media equation and reaction diffusion systems.
\newblock {\em SIAM Journal on Mathematical Analysis}, 57(4):3806--3839, 2025.

\bibitem[JKO98]{JKO}
Richard Jordan, David Kinderlehrer, and Felix Otto.
\newblock The variational formulation of the fokker--planck equation.
\newblock {\em SIAM Journal on Mathematical Analysis}, 29(1):1--17, 1998.

\bibitem[KS81]{krylov1981certain}
Nicolai~V Krylov and Mikhail~V Safonov.
\newblock A certain property of solutions of parabolic equations with measurable coefficients.
\newblock {\em Mathematics of the USSR-Izvestiya}, 16(1):151--164, 1981.

\bibitem[KY12]{kim2012patlak}
Inwon Kim and Yao Yao.
\newblock The patlak--keller--segel model and its variations: properties of solutions via maximum principle.
\newblock {\em SIAM Journal on Mathematical Analysis}, 44(2):568--602, 2012.

\bibitem[KZ21]{kim2021}
Inwon Kim and Yuming~Paul Zhang.
\newblock Porous medium equation with a drift: free boundary regularity.
\newblock {\em Archive for Rational Mechanics and Analysis}, 242(2):1177--1228, 2021.

\bibitem[LOWY24]{li2024stochastic}
Qin Li, Maria Oprea, Li~Wang, and Yunan Yang.
\newblock Stochastic inverse problem: stability, regularization and {Wasserstein} gradient flow.
\newblock {\em arXiv preprint arXiv:2410.00229}, 2024.

\bibitem[LS26]{lascu2026saddle}
Razvan-Andrei Lascu and Taiji Suzuki.
\newblock From saddle points toward global minima: A newton-type method on {Wasserstein} space.
\newblock {\em arXiv preprint arXiv:2605.17963}, 2026.

\bibitem[LZZ19]{liu2019understanding}
Chang Liu, Jingwei Zhuo, and Jun Zhu.
\newblock Understanding mcmc dynamics as flows on the wasserstein space.
\newblock In {\em International conference on machine learning}, pages 4093--4103. PMLR, 2019.

\bibitem[McC97]{mccann1997convexity}
Robert~J McCann.
\newblock A convexity principle for interacting gases.
\newblock {\em Advances in mathematics}, 128(1):153--179, 1997.

\bibitem[MO14]{matthes2014convergence}
Daniel Matthes and Horst Osberger.
\newblock Convergence of a variational lagrangian scheme for a nonlineardrift diffusion equation.
\newblock {\em ESAIM: Mathematical modelling and numerical analysis}, 48(3):697--726, 2014.

\bibitem[MS17]{matthes2017convergent}
Daniel Matthes and Benjamin S{\"o}llner.
\newblock Convergent lagrangian discretization for drift-diffusion with nonlocal aggregation.
\newblock In {\em Innovative algorithms and analysis}, pages 313--351. Springer, 2017.

\bibitem[OR07]{otto2007slow}
Felix Otto and Maria~G Reznikoff.
\newblock Slow motion of gradient flows.
\newblock {\em Journal of Differential Equations}, 237(2):372--420, 2007.

\bibitem[Ott01]{otto2001geometry}
Felix Otto.
\newblock The geometry of dissipative evolution equations: the porous medium equation.
\newblock 2001.

\bibitem[OV00]{otto2000generalization}
Felix Otto and C{\'e}dric Villani.
\newblock Generalization of an inequality by talagrand and links with the logarithmic sobolev inequality.
\newblock {\em Journal of Functional Analysis}, 173(2):361--400, 2000.

\bibitem[Rig25]{rigollet2025mean}
Philippe Rigollet.
\newblock The mean-field dynamics of transformers.
\newblock {\em arXiv preprint arXiv:2512.01868}, 2025.

\bibitem[RVE22]{rotskoff2022trainability}
Grant Rotskoff and Eric Vanden-Eijnden.
\newblock Trainability and accuracy of artificial neural networks: An interacting particle system approach.
\newblock {\em Communications on Pure and Applied Mathematics}, 75(9):1889--1935, 2022.

\bibitem[San15]{santambrogio2015optimal}
Filippo Santambrogio.
\newblock {\em Optimal transport for applied mathematicians}, volume~87.
\newblock Springer, 2015.

\bibitem[SDD{\etalchar{+}}23]{sun2023discrete}
Haoran Sun, Hanjun Dai, Bo~Dai, Haomin Zhou, and Dale Schuurmans.
\newblock Discrete langevin samplers via wasserstein gradient flow.
\newblock In {\em International Conference on Artificial Intelligence and Statistics}, pages 6290--6313. PMLR, 2023.

\bibitem[Sei14]{seis2014long}
Christian Seis.
\newblock Long-time asymptotics for the porous medium equation: the spectrum of the linearized operator.
\newblock {\em Journal of Differential Equations}, 256(3):1191--1223, 2014.

\bibitem[Shu21]{shu2021equilibration}
Ruiwen Shu.
\newblock Equilibration of aggregation-diffusion equations with weak interaction forces.
\newblock {\em SIAM Journal on Mathematical Analysis}, 53(6):6562--6616, 2021.

\bibitem[SSMC26]{seo2026local}
Geuntaek Seo, Minseop Shin, Pierre Monmarch{\'e}, and Beomjun Choi.
\newblock Local exponential stability of mean-field langevin descent-ascent in {Wasserstein} space.
\newblock {\em arXiv preprint arXiv:2602.01564}, 2026.

\bibitem[Vil06]{villani2006hypocoercivity}
C{\'e}dric Villani.
\newblock Hypocoercivity.
\newblock {\em arXiv preprint math/0609050}, 2006.

\bibitem[Vil08]{villani2008optimal}
C{\'e}dric Villani.
\newblock {\em Optimal transport: old and new}, volume 338.
\newblock Springer, 2008.

\bibitem[WL20]{wang2020information}
Yifei Wang and Wuchen Li.
\newblock Information newton's flow: second-order optimization method in probability space.
\newblock {\em arXiv preprint arXiv:2001.04341}, 2020.

\bibitem[WYZ16]{wu2016}
Huoxiong Wu, Dongyong Yang, and Jing Zhang.
\newblock Oscillation and variation for semigroups associated with bessel operators.
\newblock {\em Journal of Mathematical Analysis and Applications}, 443(2):848--867, 2016.

\bibitem[YY22]{yan2022sharp}
Xukai Yan and Yao Yao.
\newblock Sharp stability for the interaction energy: Xukai yan \& yao yao.
\newblock {\em Archive for Rational Mechanics and Analysis}, 246(2):603--629, 2022.

\bibitem[Zha22]{zhang2022}
Yuming~Paul Zhang.
\newblock Free boundary regularity of the porous medium equation with nonlocal drifts in dimension one.
\newblock {\em Calculus of variations and partial differential equations}, 61(2):41, 2022.

\end{thebibliography}
\newcommand{\etalchar}[1]{$^{#1}$}

\end{document}